\numberwithin{equation}{section}
  \newtheorem{theorem}{Theorem}[section]
  \newtheorem{proposition}[theorem]{Proposition}
  \newtheorem{lemma}[theorem]{Lemma}
  \newtheorem{corollary}[theorem]{Corollary}
  \newtheorem{example}[theorem]{Example}
\title[On total mean curvatures of foliated half-lightlike submanifolds]{On total mean curvatures of foliated half-lightlike submanifolds in semi-Riemannian manifolds}
\author[Fortun\'{e} Massamba, Samuel Ssekajja ]{Fortun\'{e} Massamba*,   Samuel Ssekajja**}
\newcommand{\acr}{\newline\indent}
\address{\llap{*\,} School of Mathematics, Statistics and Computer Science\acr
 University of KwaZulu-Natal\acr
 Private Bag X01, Scottsville 3209\acr
South Africa}
\email{massfort@yahoo.fr, Massamba@ukzn.ac.za} 
\thanks{}
\address{\llap{**\,} School of Mathematics, Statistics and Computer Science\acr
 University of KwaZulu-Natal\acr
 Private Bag X01, Scottsville 3209\acr
South Africa}
\email{ssekajja.samuel.buwaga@aims-senegal.org} 
\thanks{}
\subjclass[2010]{Primary 53C25; Secondary 53C40, 53C50}
\keywords{Half-lightlike submanifolds; Newton transformation, foliation and mean curvature}
\begin{document}
  
\begin{abstract}
We derive  total mean curvature integration formulae of a three co-dimensional foliation $\mathcal{F}^{n}$ on a screen integrable half-lightlike submanifold, $M^{n+1}$ in a semi-Riemannian manifold $\overline{M}^{n+3}$. We give generalized differential equations relating to mean curvatures of a totally umbilical half-lightlike submanifold admitting a totally umbilical screen distribution, and show that they are generalizations of those given by \cite{ds2}.
\end{abstract}

\maketitle

\section{Introduction} 
The rapidly growing importance of lightlike submanifolds in semi-Riemannian geometry, particularly Lorentzian geometry, and their applications to mathematical physics--like in general relativity and electromagnetism motivated the study of lightlike geometry in semi-Riemannian manifolds. More precisely, lightlike submanifolds have been shown to represent different black hole horizons (see \cite{db} and \cite{ds2} for details). Among other motivations for investing in lightlike geometry by many physicists is the idea that the universe we are living in can be viewed as a 4-dimensional hypersurface embedded in $(4+m)$-dimensional spacetime manifold, where $m$ is any arbitrary integer. There are significant differences between lightlike geometry and Riemannian geometry as shown in \cite{db} and \cite{ds2}, and many more references therein. Some of the pioneering work on this topic is due to Duggal-Bejancu \cite{db}, Duggal-Sahin \cite{ds2} and Kupeli \cite{kup}. It is upon those books that many other researchers, including but not limited to \cite{ca}, \cite{dusa1}, \cite{gup1}, \cite{Jin}, \cite{ma1}, \cite{ma2}, \cite{ms}, \cite{cohen}, have extended their theories. 

Lightlike geometry rests on a number of operators, like shape and algebraic invariants derived from them, such as trace, determinants, and in general the $r$-th mean curvature $S_{r}$. There is a great deal of work so far on the case $r=1$ (see some in \cite{db}, \cite{ds2} and many more) and as far as we know, very little has been done for the case $r>1$. This is partly due to the non-linearity of $S_{r}$  for $r>1$, and hence very complicated to study. A great deal of research on higher order mean curvatures $S_{r}$ in Riemannian geometry has been done with numerous applications, for instance see \cite{krz} and \cite{woj}. This gap has motivated our introduction of lightlike geometry of $S_{r}$ for $r>1$.  In this paper we have considered a half-lightlike submanifold admitting an integrable screen distribution, of a semi-Riemannian manifold. On it we have focused on a  codimension 3 foliation of its screen distribution and thus derived integral formulae of its total mean curvatures (see Theorems \ref{theorem3} and \ref{theorem5}). Furthermore, we have considered totally umbilical half-lightlike submanifolds, with a totally umbilical screen distribution and generalized Theorem 4.3.7 of \cite{ds2} (see Theorem \ref{thmy} and its Corollaries). The paper is organized as follows; In section \ref{prel} we summarize the basic notions on lightlike geometry necessary for other sections. In section \ref{New1} we give some basic information on Newton transformations of a foliation $\mathcal{F}$ of the screen distribution. Section \ref{dist} focuses on integration formulae of $\mathcal{F}$ and their consequences. In section \ref{screen} we discus screen umbilical half-lightlike submanifolds and generalizations of some well-known results of \cite{ds2}.

\section{Preliminaries}\label{prel}

Let $(M^{n+1},g)$ be a two-co-dimensional submanifold of a semi-Riemannian manifold $(\overline{M}^{n+3},\overline{g})$, where $g=\overline{g}|_{TM}$. The submanifold $(M^{n+1},g)$ is called a \textit{half-lightlike} if the radical distribution $\mathrm{Rad}\,T M = T M \cap TM^{\perp}$ is a vector subbundle of the tangent bundle $TM$ and the normal bundle $TM^{\perp}$ of $M$ , with rank one. Let $S(T M)$ be a \textit{screen distribution} which is a semi-Riemannian complementary distribution of $\mathrm{Rad}\,T M$ in $T M$, and also choose a \textit{screen transversal bundle} $S(TM^\perp)$, which is semi-Riemannian and complementary to $\mathrm{Rad}\, TM$ in $TM^\perp$. Then, 
 \begin{equation}\label{N5}
  T M = \mathrm{Rad}\,T M \perp S(T M),\;\;T M^{\perp} = \mathrm{Rad}\,T M \perp S(TM^{\perp}).
 \end{equation} 
We will denote by $\Gamma(\Xi)$ the set of smooth sections of the vector bundle $\Xi$.
It is well-known from \cite{db} and \cite{ds2} that for any null section $E$ of $\mathrm{Rad}\,TM$, there exists a unique null section $N$ of the orthogonal complement of $S(T M^\perp)$ in $S(T M )^\perp$  such that $g(E,N) = 1$, it follows that there exists a lightlike \textit{transversal vector bundle} $l\mathrm{tr}(TM)$ locally spanned by $N$. Let $W\in\Gamma(S(TM^{\perp}))$ be a unit vector field, then $\overline{g}(N,N) = \overline{g}(N, Z)=\overline{g}(N, W) = 0$, for any $Z\in \Gamma(S(TM))$.
 
Let $\mathrm{tr}(TM)$ be complementary (but not orthogonal) vector bundle to $TM$ in $T\overline{M}$. Then we have the following decompositions of $\mathrm{tr}(TM)$ and $T\overline{M}$
\begin{align}
 \mathrm{tr}(TM)& =l\mathrm{tr}(TM)\perp S(TM^\perp),\label{N8}\\
  T\overline{M} & =  S(TM)\perp S(TM^\perp)\perp\{\mathrm{Rad}\, TM\oplus l\mathrm{tr}(TM)\}\label{N9} .
\end{align}
It is important to note that the distribution $S(TM)$ is not unique, and is canonically isomorphic to the factor vector bundle $TM/ \mathrm{Rad}\, TM$  \cite{db}. Let $P$ be the projection of $TM$ on to $S(TM)$. Then the local Gauss-Weingarten equations of $M$ are the following;
\begin{align}
 &\overline{\nabla}_{X}Y=\nabla_{X}Y+B(X,Y)N+D(X,Y)W,\label{P1}\\
 &\overline{\nabla}_{X}N=-A_{N}X+\tau(X)N+\rho(X)W,\label{P2}\\
 &\overline{\nabla}_{X}W=-A_{W}X+\phi(X)N,\label{P3}\\
  &\nabla_{X}PY = \nabla^{*}_{X}PY + C(X,PY)E,\label{P4}\\
  &\nabla_{X}E =-A^{ * }_{E}X -\tau(X) E,\label{P5}
 \end{align}
 for all $E\in\Gamma(\mathrm{Rad}\,T M)$, $N\in\Gamma(l\mathrm{tr}(T M))$ and $W\in\Gamma(S(TM^{\perp}))$, where $\nabla$ and $\nabla^{*}$ are induced linear connections on $TM$ and $S(TM)$, respectively, $B$ and $D$ are called the local second fundamental forms of $M$, $C$  is the local second fundamental form on $S(T M)$. Furthermore,  $\{A_{N}, A_{W}\}$ and $A^{*}_{E}$ are the shape operators on $TM$ and $S(TM)$ respectively, and $\tau$, $\rho$, $\phi$ and $\delta$ are differential 1-forms on $TM$. Notice that $\nabla^{*}$ is a metric connection on $S(TM)$ while $\nabla$ is generally not a metric connection. In fact, $\nabla$ satisfies the following relation
 \begin{align}\label{P6}
  (\nabla_{X}g)(Y,Z)= B(X,Y)\lambda(Z) + B(X,Z)\lambda(Y),
 \end{align} 
 for all $X,Y,Z\in\Gamma(TM)$, where $\lambda$ is a 1-form on $TM$ given  $\lambda(\cdot)= \overline{g}(\cdot, N)$.  
It is well-known from \cite{db} and \cite{ds2} that $B$ and $D$ are independent of the choice of $S(TM)$ and they satisfy
\begin{equation}\label{P7}
 B(X, E) = 0,\;\;\;D(X, E) = −\phi(X),\;\;\forall\,  X \in\Gamma(TM).
\end{equation}
The local second fundamental forms $B$, $D$ and $C$ are related to their shape operators by the following equations
 \begin{align}
  &g(A^{*}_{E}X,Y)=B(X,Y),\;\;\;\;\; \overline{g}(A^{*}_{E}X,N)=0,\label{P8}\\
  &g(A_{W}X,Y) = \varepsilon D(X,Y)+ \phi(X)\lambda(Y),\label{P9}\\
  &g(A_{N}X,PY) = C(X,PY ),\;\;\overline{g}(A_{N} X,N) = 0,\label{P10}\\
  &\overline{g}(A_{W} X,N) = \varepsilon \rho(X),\;\;\mbox{where}\;\; \varepsilon=\overline{g}(W,W),\label{P11}
  \end{align}
 for all $X,Y\in\Gamma(TM)$.
 From equations (\ref{P8}) we deduce that $A^{*}_{E}$ is $S(TM)$-valued, self-adjoint and satisfies $A^{*}_{E}E=0$.
Let $\overline{R}$ denote the curvature tensor of $\overline{M}$, then 
\begin{align}\label{P60}
 \overline{g}(\overline{R}(X,Y)PZ,N)&=g((\nabla_{X}A_{N})Y, PZ)-g((\nabla_{Y}A_{N})X, PZ)\nonumber\\
 &+\tau(Y)C(X,PZ)-\varepsilon\tau(X)C(Y,PZ)\{\rho(Y)D(X,PZ)\nonumber\\
 &-\rho(X)D(Y,PZ)\},\;\;\;\forall\, X,Y,Z\in\Gamma(TM).
\end{align}
A half-lightlike submanifold $(M,g)$ of a semi-Riemannian manifold $\overline{M}$ is said to be totally umbilical \cite{ds2} if on each coordinate neighborhood $\mathcal{U}$ there exist smooth functions $\mathcal{H}_{1}$ and $\mathcal{H}_{2}$ on $l\mathrm{tr}(TM)$ and $S(TM^{\perp})$ respect such that 
\begin{align}\label{P61}
 B(X,Y)=\mathcal{H}_{1}g(X,Y),\;\;\;D(X,Y)=\mathcal{H}_{2}g(X,Y),\;\;\forall\, X,Y\in\Gamma(TM).
\end{align}
Furthermore, when $M$ is totally umbilical then the following relations follows by straightforward calculations
\begin{align}\label{P62}
 A_{E}^{*}X=\mathcal{H}_{1}PX,\;\;P(A_{W}X)=\varepsilon \mathcal{H}_{2}PX,\;\;D(X,E)=0,\;\;\rho(E)=0,
\end{align}
for all $X,Y\in\Gamma(TM)$.
 
Next, we suppose that $M$ is a half-lightlike submanifold of $\overline{M}$, with an integrable screen distribution $S(TM)$. Let $M'$ be a leaf of $S(TM)$. Notice that for any screen integrable half-lightlike $M$, the leaf $M'$ of $S(TM)$ is a co-dimension 3 submanifold of $\overline{M}$ whose normal bundle  is $\{\mathrm{Rad}\,TM\oplus l\mathrm{tr}(TM)\}\perp S(TM^{\perp})$. Now, using (\ref{P1}) and (\ref{P4}) we have 
\begin{align}\label{P12}
 \overline{\nabla}_{X}Y=\nabla^{*}_{X}Y + C(X,PY)E+B(X,Y)N+D(X,Y)W,
\end{align}
for all $X,Y\in\Gamma(TM')$. Since $S(TM)$ is integrable, then its leave is semi-Riemannian and hence we have 
\begin{align}\label{P13}
 \overline{\nabla}_{X}Y=\nabla^{*'}_{X}Y+h'(X,Y), \;\;\; \forall\, X,Y\in\Gamma(TM'),
\end{align}
where $h'$ and $\nabla^{*'}$ are second fundamental form and the Levi-Civita connection of $M'$ in $\overline{M}$. From (\ref{P12}) and (\ref{P13}) we can see that 
\begin{align}\label{P14}
 h'(X,Y)=C(X,PY)E+B(X,Y)N+D(X,Y)W, 
\end{align}
for all $X,Y\in\Gamma(TM')$. Since $S(TM)$ is integrable, then it is well-known from \cite{ds2} that $C$ is symmetric on $S(TM)$ and also $A_{N}$ is self-adjoint on $S(TM)$ (see Theorem 4.1.2 for details). Thus, $h'$ given by (\ref{P14}) is symmetric on $TM'$.

Let $L\in \Gamma(\{\mathrm{Rad}\,TM\oplus l\mathrm{tr}(TM)\}\perp S(TM^{\perp}))$, then we can decompose $L$ as 
\begin{align}\label{P15}
 L=aE+bN+cW,
\end{align}
for non-vanishing smooth functions on $\overline{M}$ given by $a=\overline{g}(L,N)$, $b=\overline{g}(L,E)$ and $c=\varepsilon \overline{g}(L,W)$. Suppose that $\overline{g}(L,L)>0$, then using (\ref{P15}) we obtain a unit normal vector $\widehat{W}$ to $M'$ given by 
\begin{align}\label{P16}
 \widehat{W}=\frac{1}{\overline{g}(L,L)}(aE+bN+cW)=\frac{1}{\overline{g}(L,L)}L.
\end{align}
Next we define a (1,1) tensor $\mathcal{A}_{\widehat{W}}$ in terms of the operators $A^{*}_{E}$, $A_{N}$ and $A_{W}$ by 
\begin{align}\label{P17}
 \mathcal{A}_{\widehat{W}}X=\frac{1}{\overline{g}(L,L)}(aA^{*}_{E}X+bA_{N}X+cA_{W}X),
\end{align}
for all $X\in\Gamma(TM)$. Notice that $\mathcal{A}_{\widehat{W}}$ is self-adjoint on $S(TM)$. Applying $\overline{\nabla}_{X}$ to $\widehat{W}$ and using equations (\ref{P17}) (\ref{P1}) and (\ref{P8})-(\ref{P10}), we have 
\begin{align}\label{P18}
 g(\mathcal{A}_{\widehat{W}}X, PY)=-\overline{g}(\overline{\nabla}_{X}\widehat{W}, PY), \;\;\forall\, X,Y\in\Gamma(TM).
\end{align}
Let $\nabla^{*\perp}$ be the connection on the normal bundle $\{\mathrm{Rad}\,TM\oplus l\mathrm{tr}(TM)\}\perp S(TM^{\perp})$. Then from (\ref{P18}) we have 
\begin{align}\label{P19}
 \overline{\nabla}_{X}\widehat{W}=-\mathcal{A}_{\widehat{W}}X+\nabla^{*\perp}_{X}\widehat{W}, \;\;\forall\, X\in\Gamma(TM),
\end{align}
where 
\begin{align} 
\nabla^{*\perp}_{X}\widehat{W} & = -\frac{1}{\overline{g}(L,L)}X(\overline{g}(L,L))\widehat{W} + \frac{1}{\overline{g}(L,L)}\left[\{X(a) - a\tau(X)\}E \right.\nonumber\\
&\left. + \{X(b)+b\tau(X) + c\phi(X)\}N + \{X(c) + aD(X,E) + b\rho(X)\}W\right].\nonumber
 \end{align} 
\begin{example}
 {\rm
 Let $\overline{M}=(\mathbb{R}^{5}_{1}, \overline{g})$ be a semi-Riemannian manifold,  where $\overline{g}$ is of signature $(-,+,+,+,+)$ with respect to canonical basis $(\partial x_{1},\partial x_{2},\partial x_{3},\partial x_{4},\ \partial x_{5})$, where $(x_{1},\cdots,x_{5})$ are the usual coordinates on $\overline{M}$. Let $M$ be a submanifold of $\overline{M}$ and given parametrically by the following equations
 \begin{align*}
  x_{1}=&\varphi_{1},\;\; x_{2}= \sin \varphi_{2}\sin \varphi_{3},\;\; x_{3}=\varphi_{1},\;\; x_{4}=\cos \varphi_{2}\sin \varphi_{3},\\
  &x_{5}=\cos \varphi_{3},\;\;\mbox{where}\;\; \varphi_{2}\in[0,2\pi]\;\; \mbox{and}\;\; \varphi_{3}\in(0,\pi/2).
 \end{align*}
Then we have $TM=\mathrm{span}\{E,Z_{1},Z_{2}\}$ and $l\mathrm{tr}(TM)=\mathrm{span}\{N\}$,  where 
\begin{align*}
 &E=\partial x_{1}+\partial x_{3},\;\; Z_{1}=\cos \varphi_{3}\partial x_{2}-\sin \varphi_{2}\sin \varphi_{3}\partial x_{5},\\
 &Z_{2}=\cos \varphi_{3}\partial x_{4}-\cos \varphi_{2}\sin \varphi_{3}\partial x_{5}\;\;\mbox{and}\;\; N=\frac{1}{2}(-\partial x_{1}+\partial x_{3}).
\end{align*}
Also, by straightforward calculations, we have 
\begin{align*}
 W=\sin \varphi_{2}\sin \varphi_{3} \partial x_{2}+\cos \varphi_{2}\sin \varphi_{3}\partial x_{4}+\cos \varphi_{3}\partial x_{5}.
\end{align*}
Thus, $S(TM^{\perp})=\mathrm{span}\{W\}$ and hence $M$ is a half-lightlike submanifold of $\overline{M}$. Furthermore we have $[Z_{1},Z_{2}]=\cos \varphi_{2} \sin \varphi_{3}\partial x_{2}-\sin \varphi_{2} \sin \varphi_{3}\partial x_{4}$,
which leads to $[Z_{1},Z_{2}]=\cos \varphi_{2}\tan \varphi_{3}Z_{1}-\sin \varphi_{2}\tan \varphi_{3}Z_{2}\in\Gamma(S(TM))$. Thus, $M$ is a screen integrable half-lightlike submanifold of $\overline{M}$. Finally, it is easy to see that $A_{N}$ is self-adjoint operator on $S(TM)$.
 }
\end{example}
In the next sections we shall consider screen integrable half-lightlike submanifolds of semi-Riemannian manifold $\overline{M}$ and derive special integral formulae for a foliation of $S(TM)$, whose normal vector is $\widehat{W}$ and with shape operator $\mathcal{A}_{\widehat{W}}$.

\section{Newton transformations of $\mathcal{A}_{\widehat{W}}$} \label{New1}

Let $(\overline{M}^{m+3}, \overline{g})$ be a semi-Riemannian manifold and let $(M^{n+1},g)$ be a screen integrable half-lightlike submanifold of $\overline{M}$. Then $S(TM)$ admits a foliation and let $\mathcal{F}$ be a such foliation. Then, the leaves of $\mathcal{F}$ are  co-dimension three submanifolds of $\overline{M}$, whose normal bundle is $S(TM)^{\perp}$. Let $\widehat {W}$ be unit normal vector to $\mathcal{F}$ such that the orientation of $\overline{M}$ coincides with that given by $\mathcal{F}$ and $\widehat{W}$. The Levi-Civita connection $\overline{\nabla}$ on the tangent bundle of $\overline{M}$ induces a metric connection $\nabla'$ on $\mathcal{F}$. Furthermore, $h'$ and $\mathcal{A}_{\widehat{W}}$ are the second fundamental form and shape operator of $\mathcal{F}$. Notice that $\mathcal{A}_{\widehat{W}}$ is self-adjoint on $T\mathcal{F}$ and at each point $p\in \mathcal{F}$ has $n$ real eigenvalues (or principal curvatures) $\kappa_{1}(p),\cdots,\kappa_{n}(p)$. Attached to the shape  operator $\mathcal{A}_{\widehat{W}}$ are $n$ algebraic invariants
\begin{equation*}
 S_{r}=\sigma_{r}(\kappa_{1},\cdots, \kappa_{n}),\;\; 1\le r\le n,
\end{equation*}
where $\sigma_{r}: M^{'n}\rightarrow\mathbb{R}$ are symmetric functions given by
\begin{equation}\label{N71}
 \sigma_{r}( \kappa_{1},\cdots, \kappa_{n})=\sum_{1\leq i_{1}< \cdots< i_{r}\leq n} \kappa_{i_{1}}\cdots \kappa_{i_{r}}.
\end{equation}
 Then, the  characteristic polynomial of $\mathcal{A}_{\widehat{W}}$ is given by 
\begin{equation*}
 \det(\mathcal{A}_{\widehat{W}}-t\mathbb{I})=\sum_{\alpha=0}^{n}(-1)^{\alpha}S_{r}t^{n-\alpha},
\end{equation*}
where $\mathbb{I}$ is the identity in $\Gamma(T\mathcal{F})$. The normalized $r$-th mean curvature $H_{r}$ of $M'$ is defined by
\begin{equation*}
 H_{r}=\dbinom{n}{r}^{-1}S_{r}\;\;\;\mbox{and}\;\;\;H_{0}=1. \;\;\mbox{(a constant function 1)}.
\end{equation*}
In particular, when $r=1$ then $H_{1}=\frac{1}{n} \mathrm{tr}(\mathcal{A}_{\widehat{W}})$ which is  the \textit{mean curvature} of a   $\mathcal{F}$. On the other hand, $H_{2}$ relates directly with the (intrinsic) scalar curvature of $\mathcal{F}$. Moreover, the functions $S_{r}$ ($H_{r}$ respectively) are smooth on the whole $M$ and, for any point $p\in \mathcal{F}$, $S_{r}$ coincides with the $r$-th mean curvature at $p$. In this paper, we shall use $S_{r}$ instead of $H_{r}$.

Next, we introduce the Newton transformations with respect to the operator  $\mathcal{A}_{\widehat{W}}$.  The Newton transformations $T_{r}:\Gamma(T\mathcal{F})\rightarrow \Gamma(T\mathcal{F})$ of a foliation  $\mathcal{F}$ of a screen integrable half-lightlike submanifold $M$ of an $(n+3)$-dimensional semi-Riemannian manifold $\overline{M}$ with respect to $A_{\widehat W}$ are given by 
 by the inductive formula
\begin{equation}\label{N28}
T_{0}=\mathbb{I},\quad T_{r}=(-1)^{r}S_{r}\mathbb{I}+\mathcal{A}_{\widehat{W}}\circ T_{r-1}, \;\;\; 1\leq r\leq n.
\end{equation}
By Cayley-Hamiliton theorem, we have $T_{n}=0$. Moreover, $T_{r}$ are also self-adjoint and commutes with $\mathcal{A}_{\widehat{W}}$. Furthermore, the following algebraic properties of $T_{r}$ are well-known (see \cite{krz}, \cite{woj} and references therein for details).
\begin{align}
 \mathrm{tr}(T_{r}) & =(-1)^{r}(n-r) S_{r},\label{N31}\\
 \mathrm{tr}(\mathcal{A}_{\widehat{W}}\circ T_{r}) & =(-1)^{r}(r+1)S_{r+1},\label{N32}\\
 \mathrm{tr}(\mathcal{A}_{\widehat{W}}^{2}\circ T_{r}) & =(-1)^{r+1}(-S_{1} S_{r+1} + (r + 2) S_{r+2}),\label{N33}\\
 \mathrm{tr}(T_{r}\circ\nabla'_{X} \mathcal{A}_{\widehat{W}}) & = (-1)^{r}X(S_{r+1})=(-1)^{r}\overline{g}(\nabla'S_{r+1},X),.\label{N34}
\end{align}
for all $X\in\Gamma(T\overline{M})$.
We will also need the following divergence formula for the operators $T_{r}$
\begin{equation}\label{N35}
 \mathrm{div}^{\nabla'}(T_{r})=\mathrm{tr}(\nabla' T_{r})=\sum_{\beta=1}^{n}(\nabla'_{Z_{\beta}}T_{r})Z_{\beta},
\end{equation}
where $\{Z_{1},\cdots,Z_{n}\}$ is a local orthonormal frame field  of $T\mathcal{F}$.

\section{Integration formulae for $\mathcal{F}$}\label{dist}

This section is devoted to derivation of integral formulas of foliation $\mathcal{F}$ of $S(TM)$ with a unit normal vector $\widehat{W}$ given by (\ref{P16}). By the fact that $\overline{\nabla}$ is a metric connection then  $\overline{g}(\overline{\nabla}_{\widehat{W}}\widehat{W}, \widehat{W})=0$. This implies that the vector field $\overline{\nabla}_{\widehat{W}}\widehat{W}$ is always tangent to $\mathcal{F}$. Our main goal will be to compute the divergence of the vectors $T_{r}\overline{\nabla}_{\widehat{W}}\widehat{W}$ and $T_{r}\overline{\nabla}_{\widehat{W}}\widehat{W}+(-1)^{r}S_{r+1}\widehat{W}$. The following technical lemmas are fundamentally important to this paper.
Let $\{E,Z_{i},N,W\}$, for $i=1,\cdots,n$ be a quasi-orthonormal field of frame of $T\overline{M}$, such that $S(TM)=\mathrm{span}\{Z_{i}\}$ and $\epsilon_{i}=\overline{g}(Z_{i},Z_{i})$. 
\begin{lemma}\label{lemma1}
 Let $M$ be a screen integrable half-lightlike submanifold of $\overline{M}^{n+3}$ and let $M'$ be a foliation of $S(TM)$. Let $\mathcal{A}_{\widehat{W}}$ be its shape operator, where $\widehat{W}$ is a unit normal vector to $\mathcal{F}$. Then
 \begin{align*}
  \overline{g}((\nabla'_{X}\mathcal{A}_{\widehat{W}})Y,Z)=\overline{g}(Y,(\nabla'_{X}\mathcal{A}_{\widehat{W}})Z),\;\;\overline{g}((\nabla'_{X}T_{r})Y,Z)=\overline{g}(Y,(\nabla'_{X}T_{r})Z),
 \end{align*}
for all $X,Y,Z\in\Gamma(T\mathcal{F})$.
\end{lemma}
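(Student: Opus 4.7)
The plan is to observe that both identities are instances of one general fact: if $A$ is a self-adjoint $(1,1)$-tensor field on the leaf $\mathcal{F}$ and $\nabla'$ is a metric (Levi-Civita) connection on $\mathcal{F}$, then the covariant derivative $\nabla'_X A$ is again self-adjoint with respect to $\overline{g}$. Both $\mathcal{A}_{\widehat{W}}$ (observed to be self-adjoint on $S(TM)$ right after \eqref{P17}) and $T_r$ (stated to be self-adjoint in the paragraph following \eqref{N28}) qualify, so both halves of the lemma follow at once.

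To carry out this general fact, I would start from the defining identity $(\nabla'_X A)Y = \nabla'_X(AY) - A(\nabla'_X Y)$ and compute
\begin{align*}
\overline{g}((\nabla'_X A)Y, Z) &= \overline{g}(\nabla'_X(AY), Z) - \overline{g}(A(\nabla'_X Y), Z).
\end{align*}
Using that $\nabla'$ is metric, I rewrite the first term as $X(\overline{g}(AY,Z)) - \overline{g}(AY, \nabla'_X Z)$; then using self-adjointness of $A$ in each of these pairings converts it to $X(\overline{g}(Y, AZ)) - \overline{g}(Y, A\nabla'_X Z)$, and self-adjointness similarly turns the second term into $\overline{g}(\nabla'_X Y, AZ)$. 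Applying metric compatibility once more to $X(\overline{g}(Y, AZ)) = \overline{g}(\nabla'_X Y, AZ) + \overline{g}(Y, \nabla'_X(AZ))$, the terms $\overline{g}(\nabla'_X Y, AZ)$ cancel and what remains is exactly $\overline{g}(Y, (\nabla'_X A)Z)$.

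With this principle in hand, the first identity of the lemma follows by taking $A = \mathcal{A}_{\widehat{W}}$. For the second identity, I would give two equivalent justifications: either apply the same principle directly to $A = T_r$ (which is already known to be self-adjoint), or, if one wished to derive self-adjointness of $\nabla'_X T_r$ from scratch, proceed by induction on $r$ using the recursion \eqref{N28}: $T_0 = \mathbb{I}$ is trivially parallel and self-adjoint, and if $\nabla'_X T_{r-1}$ is self-adjoint then so is $\nabla'_X T_r = (-1)^r X(S_r)\mathbb{I} + (\nabla'_X \mathcal{A}_{\widehat{W}}) \circ T_{r-1} + \mathcal{A}_{\widehat{W}} \circ (\nabla'_X T_{r-1})$, since $\mathcal{A}_{\widehat{W}}$ and $T_{r-1}$ commute and each summand is a composition of commuting self-adjoint maps.

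There is essentially no obstacle here; the only subtle point is being careful that all tensors involved act on $T\mathcal{F}$ (so that $\nabla'$, rather than the ambient $\nabla$ or $\overline{\nabla}$, is the correct connection to differentiate with) and that $\nabla'$ is metric with respect to $\overline{g}$ restricted to $T\mathcal{F}$, a fact recorded at the start of Section \ref{New1}.
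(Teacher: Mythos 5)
Your proposal is correct and follows essentially the same route as the paper: expand $(\nabla'_X A)Y=\nabla'_X(AY)-A(\nabla'_X Y)$, then use metric compatibility of $\nabla'$ together with self-adjointness of $\mathcal{A}_{\widehat{W}}$ (respectively $T_r$) to move the operator across the metric, which is exactly the computation in \eqref{P20}--\eqref{P21}. The extra inductive justification of the self-adjointness of $\nabla'_X T_r$ is a harmless elaboration of the paper's remark that the second identity ``follows in the same way.''
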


\begin{proof}
 By simple calculations we have 
 \begin{align}\label{P20}
  \overline{g}((\nabla'_{X}\mathcal{A}_{\widehat{W}})Y,Z)=\overline{g}(\nabla'_{X}(\mathcal{A}_{\widehat{W}}Y),Z)-\overline{g}(\nabla'_{X}Y,\mathcal{A}_{\widehat{W}}Z).
 \end{align}
Using the fact that $\nabla'$ is a metric connection and the symmetry of $\mathcal{A}_{\widehat{W}}$, (\ref{P20}) gives
\begin{align}\label{P21}
 \overline{g}((\nabla'_{X}\mathcal{A}_{\widehat{W}})Y,Z)=\overline{g}(Y,\nabla'_{X}(\mathcal{A}_{\widehat{W}}Z))-\overline{g}(Y,\mathcal{A}_{\widehat{W}}(\nabla'_{X}Z)).
\end{align}
Then, from (\ref{P21}) we deduce the first relation of the lemma. A proof of the second relation follows in the same way, which completes the proof.
\end{proof}
\begin{lemma}\label{lemma2}
  Let $M$ be a screen integrable half-lightlike submanifold of $\overline{M}$ and let $\mathcal{F}$ be a co-dimension three foliation of $S(TM)$. Let $\mathcal{A}_{\widehat{W}}$ be its shape operator, where $\widehat{W}$ is a unit normal vector to $\mathcal{F}$. Denote by $\overline{R}$ the curvature tensor of $\overline{M}$. Then
  \begin{align*}
   \mathrm{div}^{\nabla'}(T_{0})&=0,\\
   \mathrm{div}^{\nabla'}(T_{r})& =\mathcal{A}_{\widehat{W}}\mathrm{div}^{\nabla'}(T_{r-1}) + \sum_{i=1}^{n}\epsilon_{i}(\overline{R}(\widehat {W},T_{r-1}Z_{i})Z_{i})',
  \end{align*}
where $(\overline{R}(\widehat {W},X)Z)'$ denotes the tangential component of $\overline{R}(\widehat {W},X)Z$ for $X,Z\in\Gamma(T\mathcal{F})$. Equivalently, for any $Y\in\Gamma(T\mathcal{F})$ then
\begin{align}\label{P22}
 \overline{g}(\mathrm{div}^{\nabla'}(T_{r}),Y)=\sum_{j=1}^{r}\sum_{i=1}^{n} \epsilon_{i}\overline{g}(\overline{R}(T_{r-1}Z_{i},\widehat{W})(-\mathcal{A}_{\widehat{W}})^{j-1}Y,Z_{i}).
\end{align}
\end{lemma}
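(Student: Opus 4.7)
My plan is induction on $r$. The base case is immediate: since $T_0 = \mathbb{I}$, each $(\nabla'_{Z_\beta} T_0)$ vanishes, so $\mathrm{div}^{\nabla'}(T_0)=0$.

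For the inductive step, I would apply $\nabla'_{Z_\beta}$ to the defining recursion $T_r = (-1)^r S_r \mathbb{I} + \mathcal{A}_{\widehat{W}}\circ T_{r-1}$, evaluate at $Z_\beta$, weight by $\epsilon_\beta$, and sum, obtaining
\begin{equation*}
\mathrm{div}^{\nabla'}(T_r) = (-1)^r \nabla' S_r + \sum_{\beta=1}^n \epsilon_\beta (\nabla'_{Z_\beta}\mathcal{A}_{\widehat{W}})(T_{r-1}Z_\beta) + \mathcal{A}_{\widehat{W}}\,\mathrm{div}^{\nabla'}(T_{r-1}).
\end{equation*}
Comparing with the claim, it remains to identify the middle sum with $(-1)^{r+1}\nabla' S_r + \sum_i\epsilon_i(\overline{R}(\widehat{W},T_{r-1}Z_i)Z_i)'$. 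I would do this by first deriving the Codazzi-type identity for $\mathcal{F}$ as a codimension-three submanifold of $\overline{M}$: expand $\overline{R}(X,Y)\widehat{W}$ via the Weingarten equation (\ref{P19}) and project tangentially, so that $(\nabla'_X\mathcal{A}_{\widehat{W}})Y-(\nabla'_Y\mathcal{A}_{\widehat{W}})X$ is expressed in terms of $(\overline{R}(X,Y)\widehat{W})'$ together with correction terms arising from $\nabla^{*\perp}\widehat{W}$. I would then pair the middle sum against an arbitrary $Y\in\Gamma(T\mathcal{F})$, use the self-adjointness of $\nabla'_X\mathcal{A}_{\widehat{W}}$ and $T_{r-1}$ from Lemma \ref{lemma1} to transfer operators, and Codazzi-swap $(\nabla'_{Z_\beta}\mathcal{A}_{\widehat{W}})Y$ into $(\nabla'_Y\mathcal{A}_{\widehat{W}})Z_\beta$. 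This produces two pieces: the scalar $\mathrm{tr}(T_{r-1}\circ\nabla'_Y\mathcal{A}_{\widehat{W}})=(-1)^{r-1}Y(S_r)$ from identity (\ref{N34}), which cancels the gradient contribution $(-1)^rY(S_r)$, and a curvature trace which, after applying the pair symmetry $\overline{g}(\overline{R}(X,Y)Z,V)=\overline{g}(\overline{R}(Z,V)X,Y)$ and the skew symmetries of $\overline{R}$ in its first and last pairs of slots, reproduces exactly $\sum_i\epsilon_i(\overline{R}(\widehat{W},T_{r-1}Z_i)Z_i)'$.

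For the equivalent scalar formula (\ref{P22}), I would iterate the recursion just proved, writing $\mathrm{div}^{\nabla'}(T_r)=\mathcal{A}_{\widehat{W}}\mathrm{div}^{\nabla'}(T_{r-1})+K_{r-1}$ and unfolding down to $\mathrm{div}^{\nabla'}(T_0)=0$; pairing with $Y$ and transferring each power of the self-adjoint $\mathcal{A}_{\widehat{W}}$ onto $Y$, then applying the standard symmetries of $\overline{R}$, should rearrange the telescoped expression into the stated bilinear form.

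The main obstacle I anticipate is controlling the correction terms in the Codazzi identity caused by the codimension-three normal bundle. Unlike the classical Riemannian-hypersurface situation, the tangential part of $\overline{R}(X,Y)\widehat{W}$ acquires extra shape-operator contributions from the derivatives of $\widehat{W}$ along the three normal directions $E$, $N$, and $W$ recorded by the formula immediately after (\ref{P19}). I would have to verify that, after tracing against $Z_\beta$ with the signature weights $\epsilon_\beta$ and exploiting the skew symmetry in the $X,Y$ slots, these correction terms either cancel pairwise or are absorbed by the tangential projection $(\cdot)'$ on the right-hand side.
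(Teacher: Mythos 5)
Your proposal takes essentially the same route as the paper's proof: expand the recursion (\ref{N28}) to get $\mathrm{div}^{\nabla'}(T_r)=(-1)^r\nabla'S_r+\mathcal{A}_{\widehat{W}}\mathrm{div}^{\nabla'}(T_{r-1})+\sum_i\epsilon_i(\nabla'_{Z_i}\mathcal{A}_{\widehat{W}})T_{r-1}Z_i$, pair with $Y$, Codazzi-swap $(\nabla'_{Z_i}\mathcal{A}_{\widehat{W}})Y$ into $(\nabla'_Y\mathcal{A}_{\widehat{W}})Z_i$ using the self-adjointness of Lemma \ref{lemma1}, cancel the gradient term against $\mathrm{tr}(T_{r-1}\circ\nabla'_Y\mathcal{A}_{\widehat{W}})=(-1)^{r-1}Y(S_r)$ via (\ref{N34}), and obtain (\ref{P22}) by unfolding the recursion inductively. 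The one step you flag but leave unverified---whether the codimension-three Codazzi equation picks up correction terms of the form $\overline{g}(h'(\cdot,\cdot),\nabla^{*\perp}\widehat{W})$ from the normal connection---is exactly the point the paper passes over silently by invoking the hypersurface-form Codazzi identity, so your caution identifies a real issue in the argument rather than a methodological difference.
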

\begin{proof}
The first equation of the lemma is obvious since $T_{0}=\mathbb{I}$. We turn to the second relation. By direct calculations using the recurrence relation (\ref{N28}) we derive
\begin{align}\label{P23}
 \mathrm{div}^{\nabla'}(T_{r})&=(-1)^{r}\mathrm{div}^{\nabla'}(S_{r}\mathbb{I})+\mathrm{div}^{\nabla'}(\mathcal{A}_{\widehat{W}}\circ T_{r-1})\nonumber\\
  &=(-1)^{r}\nabla'S_{r}+\mathcal{A}_{\widehat{W}}\mathrm{div}^{\nabla'}(T_{r-1})+\sum_{i=1}^{n}\epsilon_{i}(\nabla'_{Z_{i}}\mathcal{A}_{\widehat{W}})T_{r-1}Z_{i}. 
\end{align}
Using Codazzi equation 
\begin{equation*}
 \overline{g}(\overline{R}(X,Y)Z,\widehat{W})=\overline{g}((\nabla'_{Y}\mathcal{A}_{\widehat{W}})X,Z)-\overline{g}((\nabla'_{X}\mathcal{A}_{\widehat{W}})Y,Z),
\end{equation*}
for any $X,Y,Z\in\Gamma(T\mathcal{F})$ and Lemma \ref{lemma1}, we have 
\begin{align}\label{P24}
 \overline{g}((\nabla'_{Z_{i}}\mathcal{A}_{\widehat{W}})Y,&T_{r-1}Z_{i})= \overline{g}((\nabla'_{Y}\mathcal{A}_{\widehat{W}})Z_{i},T_{r-1}Z_{i})+\overline{g}(\overline{R}(Y,Z_{i})T_{r-1}Z_{i},\widehat{W})\nonumber\\
 &=\overline{g}(T_{r-1}(\nabla'_{Y}\mathcal{A}_{\widehat{W}})Z_{i},Z_{i})+\overline{g}(\overline{R}(\widehat{W},T_{r-1}Z_{i})Z_{i},Y),
\end{align}
for any $Y\in\Gamma(T\mathcal{F})$. Then applying (\ref{P23}) and (\ref{P24}) we get
\begin{align}\label{P25}
\overline{g}(\mathrm{div}^{\nabla'}(T_{r}), Y)&=(-1)^{r}\overline{g}(\nabla'S_{r},Y)+\mathrm{tr}(T_{r-1}(\nabla'_{Y}\mathcal{A}_{\widehat{W}}))\nonumber\\
&+\overline{g}(\mathrm{div}^{\nabla'}(T_{r-1}),Y) + \overline{g}(Y, \sum_{i=1}^{n}\epsilon_{i}\overline{R}(\widehat{W},T_{r-1}Z_{i})Z_{i}).
\end{align}
Then, applying (\ref{P25}) and (\ref{N34}) we get the second equation of the lemma. Finally, (\ref{P22}) follows immediately by an induction argument.
\end{proof}
Notice that when the ambient manifold is a  space form of constant sectional
curvature, then $(\overline{R}(\widehat{W}, X )Y)'= 0$ for each $X,Y\in\Gamma(T\mathcal{F})$. Hence, from Lemma (\ref{lemma2}) we have $\mathrm{div}^{\nabla'}(T_{r})=0$.

\begin{lemma}\label{lemma3}
 Let $M$ be a screen integrable half-lightlike submanifold of $\overline{M}$ and let $\mathcal{F}$ be a co-dimension three foliation of $S(TM)$. Let $\mathcal{A}_{\widehat{W}}$ be its shape operator, where $\widehat{W}$ is a unit normal vector to $\mathcal{F}$. Let $\{Z_{i}\}$ be a local field such $(\nabla'_{X}Z_{i})p=0$, for $i=1,\cdots,n$ and any vector field $X\in\Gamma(T\overline{M})$. Then at $p\in \mathcal{F}$ we have 
 \begin{align*}
  g(\nabla'_{Z_{i}}\overline{\nabla}_{\widehat{W}}\widehat{W},Z_{j})&=g(\mathcal{A}_{\widehat{W}}^{2}Z_{i},Z_{j})-\overline{g}(\overline{R}(Z_{i},\widehat{W})Z_{j}, \widehat{W})\\
  &-\overline{g}((\nabla'_{\widehat{W}}\mathcal{A}_{\widehat{W}})Z_{i},Z_{j})+g(\overline{\nabla}_{\widehat{W}}\widehat{W},Z_{i})g(Z_{j},\overline{\nabla}_{\widehat{W}}\widehat{W}).
 \end{align*}
\end{lemma}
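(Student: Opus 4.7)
The plan is to apply the ambient Ricci identity
\[
\overline{R}(Z_i,\widehat{W})\widehat{W}=\overline{\nabla}_{Z_i}\overline{\nabla}_{\widehat{W}}\widehat{W}-\overline{\nabla}_{\widehat{W}}\overline{\nabla}_{Z_i}\widehat{W}-\overline{\nabla}_{[Z_i,\widehat{W}]}\widehat{W},
\]
solve for $\overline{\nabla}_{Z_i}\overline{\nabla}_{\widehat{W}}\widehat{W}$, and evaluate the $\overline{g}(\cdot,Z_j)$ component at the point $p$. Since $\overline{\nabla}_{\widehat{W}}\widehat{W}$ is tangent to $\mathcal{F}$ and $h'(Z_i,\overline{\nabla}_{\widehat{W}}\widehat{W})$ is normal, the Gauss formula for $\mathcal{F}\subset\overline{M}$ identifies the left-hand side $g(\nabla'_{Z_i}\overline{\nabla}_{\widehat{W}}\widehat{W},Z_j)_p$ with $\overline{g}(\overline{\nabla}_{Z_i}\overline{\nabla}_{\widehat{W}}\widehat{W},Z_j)_p$. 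Curvature antisymmetry rewrites $\overline{g}(\overline{R}(Z_i,\widehat{W})\widehat{W},Z_j)$ as $-\overline{g}(\overline{R}(Z_i,\widehat{W})Z_j,\widehat{W})$, which is precisely the second term on the right of the claim.

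For the middle summand, insert $\overline{\nabla}_{Z_i}\widehat{W}=-\mathcal{A}_{\widehat{W}}Z_i+\nabla^{*\perp}_{Z_i}\widehat{W}$ from (\ref{P19}) and differentiate along $\widehat{W}$. Pairing with $Z_j$, the tangential contribution $-\overline{g}(\overline{\nabla}_{\widehat{W}}(\mathcal{A}_{\widehat{W}}Z_i),Z_j)$ is rewritten as $-\overline{g}((\nabla'_{\widehat{W}}\mathcal{A}_{\widehat{W}})Z_i,Z_j)$, using the defining relation $(\nabla'_{\widehat{W}}\mathcal{A}_{\widehat{W}})Z_i=\nabla'_{\widehat{W}}(\mathcal{A}_{\widehat{W}}Z_i)-\mathcal{A}_{\widehat{W}}(\nabla'_{\widehat{W}}Z_i)$ together with the hypothesis $(\nabla'_{\widehat{W}}Z_i)_p=0$; this recovers the third term of the lemma.

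For the bracket summand, I expand $[Z_i,\widehat{W}]=-\mathcal{A}_{\widehat{W}}Z_i+\nabla^{*\perp}_{Z_i}\widehat{W}-\overline{\nabla}_{\widehat{W}}Z_i$ and apply $\overline{\nabla}_{\cdot}\widehat{W}$ to each summand separately. Weingarten applied to $-\mathcal{A}_{\widehat{W}}Z_i$ yields, after pairing with $Z_j$, the term $g(\mathcal{A}_{\widehat{W}}^{2}Z_i,Z_j)$. The summand $-\overline{\nabla}_{\overline{\nabla}_{\widehat{W}}Z_i}\widehat{W}$ is where the quadratic term arises: differentiating $\overline{g}(Z_i,\widehat{W})=0$ along $\widehat{W}$ shows that the $\widehat{W}$-component of $\overline{\nabla}_{\widehat{W}}Z_i$ equals $-g(Z_i,\overline{\nabla}_{\widehat{W}}\widehat{W})$, so that $-\overline{\nabla}_{\overline{\nabla}_{\widehat{W}}Z_i}\widehat{W}$ contains the piece $g(Z_i,\overline{\nabla}_{\widehat{W}}\widehat{W})\,\overline{\nabla}_{\widehat{W}}\widehat{W}$, and pairing with $Z_j$ produces the required $g(\overline{\nabla}_{\widehat{W}}\widehat{W},Z_i)g(Z_j,\overline{\nabla}_{\widehat{W}}\widehat{W})$.

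The main obstacle will be to show that the leftover pieces generated by the two-dimensional subspace of normals orthogonal to $\widehat{W}$ cancel. These arise from (i) $\overline{g}(\overline{\nabla}_{\widehat{W}}(\nabla^{*\perp}_{Z_i}\widehat{W}),Z_j)$ in the middle summand, (ii) $\overline{g}(\overline{\nabla}_{\nabla^{*\perp}_{Z_i}\widehat{W}}\widehat{W},Z_j)$ from the bracket, and (iii) the part of $\overline{\nabla}_{\overline{\nabla}_{\widehat{W}}Z_i}\widehat{W}$ living in the two-dimensional orthogonal complement of $\widehat{W}$ inside the normal bundle. Using metric compatibility of $\overline{\nabla}$, the identity $\overline{g}(\nabla^{*\perp}_{Z_i}\widehat{W},\widehat{W})=0$ (which follows from differentiating $\overline{g}(\widehat{W},\widehat{W})=1$), and the hypothesis $(\nabla'_XZ_i)_p=0$ for every ambient $X$, these three contributions pair up and cancel at $p$, leaving exactly the four terms appearing in the statement.
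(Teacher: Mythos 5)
Your reduction is correct, and up to the last step it is essentially the paper's own computation assembled differently: you apply the ambient curvature identity to $\widehat{W}$ and use $\overline{g}(\overline{R}(Z_i,\widehat{W})\widehat{W},Z_j)=-\overline{g}(\overline{R}(Z_i,\widehat{W})Z_j,\widehat{W})$, whereas the paper differentiates $\overline{g}(\overline{\nabla}_{\widehat{W}}\widehat{W},Z_j)$ and $\overline{g}(\widehat{W},\overline{\nabla}_{\widehat{W}}Z_j)$ and reassembles $\overline{R}$ acting on $Z_j$; the four displayed terms arise the same way in both arguments, and your use of the hypothesis $(\nabla'_{X}Z_i)_p=0$ and of the $\widehat{W}$-component of $\overline{\nabla}_{\widehat{W}}Z_i$ is fine.

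The genuine gap is the final assertion that the leftover pieces
\begin{equation*}
\overline{g}\bigl(\overline{\nabla}_{\widehat{W}}(\nabla^{*\perp}_{Z_i}\widehat{W}),Z_j\bigr),\qquad
\overline{g}\bigl(\overline{\nabla}_{\nabla^{*\perp}_{Z_i}\widehat{W}}\widehat{W},Z_j\bigr),\qquad
-\overline{g}\bigl(\overline{\nabla}_{\mu_i}\widehat{W},Z_j\bigr),
\end{equation*}
where $\mu_i$ denotes the component of $\overline{\nabla}_{\widehat{W}}Z_i$ normal to the leaf and orthogonal to $\widehat{W}$, ``pair up and cancel at $p$''. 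None of the ingredients you cite controls these terms. Metric compatibility only converts the first into $-\overline{g}(\nabla^{*\perp}_{Z_i}\widehat{W},\eta_j)$, with $\eta_j$ the analogous normal component of $\overline{\nabla}_{\widehat{W}}Z_j$; the second and third are ambient derivatives of $\widehat{W}$ in directions normal to the leaf (partly transversal to $M$, along $N$ and $W$), which are determined neither by $\mathcal{A}_{\widehat{W}}$, nor by $\overline{g}(\nabla^{*\perp}_{Z_i}\widehat{W},\widehat{W})=0$, nor by the normalization $(\nabla'_{X}Z_i)_p=0$. Note also that the first term carries the index $j$ (through $\eta_j$) while the third carries $i$ (through $\mu_i$), so no symmetry forces a cancellation; these contributions are exactly the higher-codimension obstruction that is absent in the codimension-one case. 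The paper does not cancel them either: its proof invokes, as standing relations, $\overline{\nabla}_{Z_i}\widehat{W}=\sum_k\epsilon_k\overline{g}(\overline{\nabla}_{Z_i}\widehat{W},Z_k)Z_k$ (i.e.\ $\nabla^{*\perp}_{Z_i}\widehat{W}=0$) and $\overline{\nabla}_{\widehat{W}}Z_j=\overline{g}(\overline{\nabla}_{\widehat{W}}Z_j,\widehat{W})\widehat{W}$ at $p$ (i.e.\ $\mu_j=\eta_j=0$), under which each of your leftover terms vanishes separately. To complete your argument you must either import these additional hypotheses explicitly, as the paper implicitly does, or supply an actual proof of the claimed cancellation, which the tools you list do not provide.
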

\begin{proof}
 Applying $\overline{\nabla}_{Z_{i}}$ to $g(\overline{\nabla}_{\widehat{W}}\widehat{W},Z_{j})$ and $\overline{g}(\widehat{W},\overline{\nabla}_{\widehat{W}}Z_{j})$ in turn and then using the two resulting equations, we have
 \begin{align}\label{P26}
  -\overline{g}(\overline{\nabla}_{\widehat{W}}\widehat{W},\overline{\nabla}_{Z_{i}}Z_{j})&=g(\overline{\nabla}_{Z_{i}}\overline{\nabla}_{\widehat{W}}\widehat{W},Z_{j})+ \overline{g}(\overline{\nabla}_{Z_{i}}\widehat{W},\overline{\nabla}_{\widehat{W}}Z_{j})\nonumber\\
  &+\overline{g}(\widehat{W},\overline{\nabla}_{Z_{i}}\overline{\nabla}_{\widehat{W}}Z_{j}).
 \end{align}
Furthermore, by direct calculations using $(\nabla'_{X}Z_{i})p=0$ we have 
\begin{align*}
 \overline{g}((\nabla'_{\widehat{W}}\mathcal{A}_{\widehat{W}})Z_{i},Z_{j})= \overline{g}(\overline{\nabla}_{\widehat{W}}\widehat{W},\overline{Z_{i}}Z_{j})+\overline{g}(\widehat{W},\overline{\nabla}_{\widehat{W}}\overline{Z_{i}}Z_{j}),
\end{align*}
and hence 
\begin{align}\label{P27}
 g(\mathcal{A}_{\widehat{W}}^{2}Z_{i},Z_{j})-\overline{g}(\overline{R}&(Z_{i},\widehat{W})Z_{j}, \widehat{W})-\overline{g}((\nabla'_{\widehat{W}}\mathcal{A}_{\widehat{W}})Z_{i},Z_{j})\nonumber\\
&= g(\mathcal{A}_{\widehat{W}}^{2}Z_{i},Z_{j})-\overline{g}(\overline{R}(Z_{i},\widehat{W})Z_{j}, \widehat{W})\nonumber\\
&-\overline{g}(\overline{\nabla}_{\widehat{W}}\widehat{W},\overline{Z_{i}}Z_{j})-\overline{g}(\widehat{W},\overline{\nabla}_{\widehat{W}}\overline{Z_{i}}Z_{j})\nonumber\\
&=g(\mathcal{A}_{\widehat{W}}^{2}Z_{i},Z_{j})-\overline{g}(\overline{\nabla}_{Z_{i}}Z_{j},\overline{\nabla}_{\widehat{W}}\widehat{W})\nonumber\\
&-\overline{g}(\overline{\nabla}_{Z_{i}}\overline{\nabla}_{\widehat{W}}Z_{j},\widehat{W})+\overline{g}(\overline{\nabla}_{[Z_{i},\widehat{W}]}Z_{j},\widehat{W}).
\end{align}
Now, applying (\ref{P26}), the condition at $p$ and the following relations 
\begin{align*}
 &\overline{\nabla}_{Z_{i}}\widehat{W}=\sum_{k=1}^{n}\epsilon_{k}\overline{g}( \overline{\nabla}_{Z_{i}}\widehat{W},Z_{k})Z_{k},\;\;\;\overline{\nabla}_{\widehat{W}}Z_{j}=\overline{g}(\overline{\nabla}_{\widehat{W}}Z_{j}, \widehat{W})\widehat{W},
\end{align*}
and $
  g(\mathcal{A}_{\widehat{W}}^{2}Z_{i},Z_{j})=-\sum_{k=1}^{n}\epsilon_{k}\overline{g}( \overline{\nabla}_{Z_{i}}\widehat{W},Z_{k})\overline{g}(\overline{\nabla}_{Z_{k}}Z_{j},\widehat{W})
$
to the last line of (\ref{P27}) and the fact that $S(TM)$ is integrable we get
\begin{align*}
 &g(\mathcal{A}_{\widehat{W}}^{2}Z_{i},Z_{j})-\overline{g}(\overline{R}(Z_{i},\widehat{W})Z_{j}, \widehat{W})-\overline{g}((\nabla'_{\widehat{W}}\mathcal{A}_{\widehat{W}})Z_{i},Z_{j})\\
 &=g(\nabla'_{Z_{i}}\overline{\nabla}_{\widehat{W}}\widehat{W},Z_{j})-g(\overline{\nabla}_{\widehat{W}}\widehat{W},Z_{i})g(Z_{j},\overline{\nabla}_{\widehat{W}}\widehat{W}),
\end{align*}
from which the lemma follows by rearrangement.
\end{proof}
Notice that, using parallel transport, we can always construct a frame field from the above lemma.
\begin{proposition}\label{proposition1}
 Let $M$ be a screen integrable half-lightlike submanifold of an indefinite almost contact manifold $\overline{M}$ and let $\mathcal{F}$ be a foliation of $S(TM)$. Then
 \begin{align*}
  &\mathrm{div}^{\nabla'}(T_{r}\overline{\nabla}_{\widehat{W}}\widehat{W})=\overline{g}( \mathrm{div}^{\nabla'}(T_{r}),\overline{\nabla}_{\widehat{W}}\widehat{W})+(-1)^{r+1}\widehat{W}(S_{r+1})\\
  &+(-1)^{r+1}(-S_{1}S_{r+1}+(r+2)S_{r+2})-\sum_{i=1}^{n}\epsilon_{i}\overline{g}(\overline{R}(Z_{i},\widehat{W})T_{r}Z_{i},\widehat{W})\\
  &+\overline{g}(\overline{\nabla}_{\widehat{W}}\widehat{W},T_{r}\overline{\nabla}_{\widehat{W}}\widehat{W}),
 \end{align*}
where $\{ Z_{i}\}$ is a field of frame tangent to the leaves of $\mathcal{F}$.
\end{proposition}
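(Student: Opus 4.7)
The plan is to compute the divergence by unpacking its definition pointwise using a frame tailored to Lemma \ref{lemma3}. Fix $p \in \mathcal{F}$ and, invoking the remark after Lemma \ref{lemma3}, build a local orthonormal frame $\{Z_i\}$ of $T\mathcal{F}$ by parallel transport so that $(\nabla'_X Z_i)_p = 0$ for every $X$. Writing $V := \overline{\nabla}_{\widehat{W}}\widehat{W}$, which is tangent to $\mathcal{F}$, and applying the product rule together with (\ref{N35}), one gets
\begin{align*}
\mathrm{div}^{\nabla'}(T_r V) &= \sum_{i=1}^{n} \epsilon_i\, \overline{g}\bigl((\nabla'_{Z_i} T_r) V,\, Z_i\bigr) + \sum_{i=1}^{n} \epsilon_i\, \overline{g}\bigl(T_r(\nabla'_{Z_i} V),\, Z_i\bigr).
\end{align*}
The two sums are then treated separately.

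For the first sum, the self-adjointness of $\nabla'_{Z_i} T_r$ (second part of Lemma \ref{lemma1}) moves $V$ across the bilinear form, so that the sum reduces to $\overline{g}\bigl(\sum_i \epsilon_i (\nabla'_{Z_i} T_r) Z_i,\, V\bigr) = \overline{g}(\mathrm{div}^{\nabla'}(T_r),\, V)$, which is exactly the first term on the right-hand side of the proposition.

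For the second sum, I would use self-adjointness of $T_r$ to rewrite it as $\sum_i \epsilon_i\, \overline{g}(\nabla'_{Z_i} V,\, T_r Z_i)$, then expand $T_r Z_i = \sum_j \epsilon_j\, g(T_r Z_i, Z_j) Z_j$ and substitute the four-term expression of Lemma \ref{lemma3} for $g(\nabla'_{Z_i} V, Z_j)$. This produces four subsums. The $\mathcal{A}^2_{\widehat{W}}$ subsum collapses to $\mathrm{tr}(\mathcal{A}_{\widehat{W}}^2 \circ T_r)$, which by (\ref{N33}) equals $(-1)^{r+1}(-S_1 S_{r+1} + (r+2) S_{r+2})$. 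The $\nabla'_{\widehat{W}}\mathcal{A}_{\widehat{W}}$ subsum becomes $-\mathrm{tr}(T_r \circ \nabla'_{\widehat{W}}\mathcal{A}_{\widehat{W}})$, which by (\ref{N34}) with $X = \widehat{W}$ yields $(-1)^{r+1}\widehat{W}(S_{r+1})$. The curvature subsum reassembles into $-\sum_i \epsilon_i\, \overline{g}(\overline{R}(Z_i,\widehat{W}) T_r Z_i,\, \widehat{W})$. Finally, the subsum containing the product $g(V, Z_i)\, g(Z_j, V)$ collapses first by summing on $j$ to $\sum_i \epsilon_i\, g(V, Z_i)\, g(T_r Z_i, V)$, and then by summing on $i$ to $\overline{g}(V,\, T_r V)$.

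The principal obstacle is the bookkeeping: managing two nested index sums weighted by $\epsilon_i \epsilon_j$, repeatedly using self-adjointness (of $T_r$, $\mathcal{A}_{\widehat{W}}$ and $\nabla'_{\widehat{W}}\mathcal{A}_{\widehat{W}}$) to recognize each subsum as the trace of a definite composition, and ensuring the curvature term keeps the correct tensorial form. Once the four subsums are identified with the last four terms of the proposition, adding them to the first-sum contribution produces the claimed identity at $p$; since $p$ was arbitrary, the identity holds globally.
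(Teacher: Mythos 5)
Your proof is correct and follows essentially the same route as the paper: the product-rule identity $\mathrm{div}^{\nabla'}(T_{r}Z)=\overline{g}(\mathrm{div}^{\nabla'}(T_{r}),Z)+\sum_{i}\epsilon_{i}\overline{g}(\nabla'_{Z_{i}}Z,T_{r}Z_{i})$ (the paper's (\ref{P30}), which you rederive via Lemma \ref{lemma1}) combined with Lemma \ref{lemma3} in a frame parallel at $p$, and the trace identities (\ref{N33})--(\ref{N34}). The paper's proof is simply a terser version of this computation, and your expansion of the four subsums matches its intended details.
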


\begin{proof}
 From (\ref{N35}),we deduce that 
 \begin{align}\label{P30}
  \mathrm{div}^{\nabla'}(T_{r}Z)=\overline{g}( \mathrm{div}^{\nabla'}(T_{r}),Z)+\sum_{i=1}^{n}\epsilon_{i}\overline{g}(\nabla'_{Z_{i}}Z,T_{r}Z_{i} ),
 \end{align}
for all $Z\in\Gamma(T\mathcal{F})$. Then using (\ref{P30}), Lemmas \ref{lemma2} and \ref{lemma3}, we obtain the desired result. Hence the proof.
\end{proof}
From Proposition \ref{proposition1} we have 
\begin{theorem}\label{theorem1}
  Let $M$ be a screen integrable half-lightlike submanifold of an indefinite almost contact manifold $\overline{M}$ and let $\mathcal{F}$ be a co-dimension three foliation of $S(TM)$. Then 
  \begin{align*}
    &\mathrm{div}^{\overline{\nabla}}(T_{r}\overline{\nabla}_{\widehat{W}}\widehat{W})=\overline{g}( \mathrm{div}^{\nabla'}(T_{r}),\overline{\nabla}_{\widehat{W}}\widehat{W})+(-1)^{r+1}\widehat{W}(S_{r+1})\\
  &+(-1)^{r+1}(-S_{1}S_{r+1}+(r+2)S_{r+2})-\sum_{i=1}^{n}\epsilon_{i}\overline{g}(\overline{R}(Z_{i},\widehat{W})T_{r}Z_{i},\widehat{W}).
 \end{align*}
\end{theorem}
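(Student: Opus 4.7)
The plan is to deduce Theorem \ref{theorem1} directly from Proposition \ref{proposition1} by converting the foliation divergence $\mathrm{div}^{\nabla'}$ into the ambient divergence $\mathrm{div}^{\overline{\nabla}}$ for vector fields tangent to $\mathcal{F}$. The gain from this conversion is precisely the extra term $\overline{g}(\overline{\nabla}_{\widehat{W}}\widehat{W}, T_{r}\overline{\nabla}_{\widehat{W}}\widehat{W})$ appearing in Proposition \ref{proposition1}, which must cancel to produce the cleaner theorem statement.

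The starting observation is that, for any $V\in\Gamma(T\mathcal{F})$, the ambient divergence expanded in the frame $\{Z_{1},\dots,Z_{n},\widehat{W}\}$ adapted to $\mathcal{F}$ and its unit normal reads
\begin{equation*}
\mathrm{div}^{\overline{\nabla}}(V)=\sum_{i=1}^{n}\epsilon_{i}\overline{g}(\overline{\nabla}_{Z_{i}}V,Z_{i})+\overline{g}(\overline{\nabla}_{\widehat{W}}V,\widehat{W}).
\end{equation*}
Since $V$ is tangent to $\mathcal{F}$, the normal components of $\overline{\nabla}_{Z_{i}}V$ vanish against $Z_{i}$, so the first sum equals $\mathrm{div}^{\nabla'}(V)$. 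For the second term, differentiating the identity $\overline{g}(V,\widehat{W})=0$ in the direction $\widehat{W}$ and using that $\overline{\nabla}$ is metric gives $\overline{g}(\overline{\nabla}_{\widehat{W}}V,\widehat{W})=-\overline{g}(V,\overline{\nabla}_{\widehat{W}}\widehat{W})$. Combining, for every $V\in\Gamma(T\mathcal{F})$ one has the key relation
\begin{equation*}
\mathrm{div}^{\overline{\nabla}}(V)=\mathrm{div}^{\nabla'}(V)-\overline{g}(V,\overline{\nabla}_{\widehat{W}}\widehat{W}).
\end{equation*}

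Next, I would specialise to $V=T_{r}\overline{\nabla}_{\widehat{W}}\widehat{W}$. Since $\overline{\nabla}_{\widehat{W}}\widehat{W}\in\Gamma(T\mathcal{F})$ (recall the remark preceding Lemma \ref{lemma1}) and $T_{r}$ preserves $T\mathcal{F}$, $V$ indeed lies in $T\mathcal{F}$, so the relation above applies. Using the self-adjointness of $T_{r}$ on $T\mathcal{F}$, one writes
\[
\overline{g}(V,\overline{\nabla}_{\widehat{W}}\widehat{W})=\overline{g}(T_{r}\overline{\nabla}_{\widehat{W}}\widehat{W},\overline{\nabla}_{\widehat{W}}\widehat{W})=\overline{g}(\overline{\nabla}_{\widehat{W}}\widehat{W},T_{r}\overline{\nabla}_{\widehat{W}}\widehat{W}).
\]
Substituting this into the key relation and then substituting Proposition \ref{proposition1} for $\mathrm{div}^{\nabla'}(T_{r}\overline{\nabla}_{\widehat{W}}\widehat{W})$ produces an exact cancellation of the unwanted term, leaving precisely the right-hand side of Theorem \ref{theorem1}.

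The main obstacle is conceptual rather than computational: one must pin down the correct interpretation of $\mathrm{div}^{\overline{\nabla}}$ in this lightlike setting and verify that the expansion in the $(n+1)$-dimensional adapted frame $\{Z_{1},\dots,Z_{n},\widehat{W}\}$ is the one implicitly intended, so that the tangency of $V$ to $\mathcal{F}$ genuinely suppresses contributions from the remaining normal directions of $\mathcal{F}$ in $\overline{M}$. Once this frame adaptation is justified, the rest is a short algebraic manipulation hinging on the metric property of $\overline{\nabla}$, the self-adjointness of $T_{r}$, and Proposition \ref{proposition1}.
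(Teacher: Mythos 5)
Your proposal is correct and takes essentially the same route as the paper: its proof consists precisely of Proposition \ref{proposition1} combined with the identity $\mathrm{div}^{\overline{\nabla}}(T_{r}\overline{\nabla}_{\widehat{W}}\widehat{W})=\mathrm{div}^{\nabla'}(T_{r}\overline{\nabla}_{\widehat{W}}\widehat{W})-\overline{g}(\overline{\nabla}_{\widehat{W}}\widehat{W},T_{r}\overline{\nabla}_{\widehat{W}}\widehat{W})$, which you actually derive rather than merely quote. Your frame-based reading of $\mathrm{div}^{\overline{\nabla}}$ over $\{Z_{1},\dots,Z_{n},\widehat{W}\}$ is consistent with the convention the paper itself uses in the proof of Theorem \ref{theorem2}, so the argument goes through as you describe.
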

\begin{proof}
 A proof follows easily form Proposition \ref{proposition1} by recognizing the fact that 
 \begin{align*}
  \mathrm{div}^{\overline{\nabla}}(T_{r}\overline{\nabla}_{\widehat{W}}\widehat{W})=\mathrm{div}^{\nabla'}(T_{r}\overline{\nabla}_{\widehat{W}}\widehat{W})-\overline{g}(\overline{\nabla}_{\widehat{W}}\widehat{W},T_{r}\overline{\nabla}_{\widehat{W}}\widehat{W}),
 \end{align*}
which completes the proof.
\end{proof}
\begin{theorem}\label{theorem2}
 Let $M$ be a screen integrable half-lightlike submanifold of  $\overline{M}$ and let $\mathcal{F}$ be a co-dimension three foliation of $S(TM)$. Then, 
 \begin{align*}
  &\mathrm{div}^{\overline{\nabla}}(T_{r}\overline{\nabla}_{\widehat{W}}\widehat{W}+(-1)^{r}S_{r+1}\widehat{W})=\overline{g}( \mathrm{div}^{\nabla'}(T_{r}),\overline{\nabla}_{\widehat{W}}\widehat{W})\\
  &+(-1)^{r+1}(r+2)S_{r+2}-\sum_{i=1}^{n}\epsilon_{i}\overline{g}(\overline{R}(Z_{i},\widehat{W})T_{r}Z_{i},\widehat{W}).
 \end{align*}

\end{theorem}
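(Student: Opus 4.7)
The strategy is to obtain Theorem \ref{theorem2} as a direct algebraic consequence of Theorem \ref{theorem1}: split the vector field in the divergence via linearity, apply Theorem \ref{theorem1} to the $T_{r}\overline{\nabla}_{\widehat{W}}\widehat{W}$ piece, compute the divergence of the correction term $(-1)^{r}S_{r+1}\widehat{W}$ independently, and then observe that the two spurious contributions ($\widehat{W}(S_{r+1})$ and $S_{1}S_{r+1}$) present in Theorem \ref{theorem1} cancel against what the correction term produces.

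Concretely, I would first write
\[
\mathrm{div}^{\overline{\nabla}}\bigl(T_{r}\overline{\nabla}_{\widehat{W}}\widehat{W}+(-1)^{r}S_{r+1}\widehat{W}\bigr)
=\mathrm{div}^{\overline{\nabla}}\bigl(T_{r}\overline{\nabla}_{\widehat{W}}\widehat{W}\bigr)+(-1)^{r}\mathrm{div}^{\overline{\nabla}}\bigl(S_{r+1}\widehat{W}\bigr),
\]
substitute Theorem \ref{theorem1} for the first term, and invoke the Leibniz rule
$\mathrm{div}^{\overline{\nabla}}(S_{r+1}\widehat{W})=\widehat{W}(S_{r+1})+S_{r+1}\,\mathrm{div}^{\overline{\nabla}}(\widehat{W})$ for the second. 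The core substep is the identity $\mathrm{div}^{\overline{\nabla}}(\widehat{W})=-S_{1}$. Working in a local frame $\{Z_{1},\ldots,Z_{n},\widehat{W}\}$ of the relevant bundle, the component $\overline{g}(\overline{\nabla}_{\widehat{W}}\widehat{W},\widehat{W})$ vanishes since $\widehat{W}$ is unit, while (\ref{P19}) and (\ref{P18}) give
\[
\sum_{i=1}^{n}\epsilon_{i}\,\overline{g}(\overline{\nabla}_{Z_{i}}\widehat{W},Z_{i})
=-\sum_{i=1}^{n}\epsilon_{i}\,g(\mathcal{A}_{\widehat{W}}Z_{i},Z_{i})
=-\mathrm{tr}(\mathcal{A}_{\widehat{W}})=-S_{1}.
\]

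Assembling everything, the correction contributes $(-1)^{r}\widehat{W}(S_{r+1})+(-1)^{r+1}S_{1}S_{r+1}$, which cancels the terms $(-1)^{r+1}\widehat{W}(S_{r+1})$ and $(-1)^{r+1}(-S_{1}S_{r+1})$ coming from Theorem \ref{theorem1}, leaving exactly $\overline{g}(\mathrm{div}^{\nabla'}(T_{r}),\overline{\nabla}_{\widehat{W}}\widehat{W})+(-1)^{r+1}(r+2)S_{r+2}-\sum_{i}\epsilon_{i}\overline{g}(\overline{R}(Z_{i},\widehat{W})T_{r}Z_{i},\widehat{W})$.

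The only point requiring genuine care is the evaluation of $\mathrm{div}^{\overline{\nabla}}(\widehat{W})$: because $\widehat{W}$ is transverse to $\mathcal{F}$, the translation formula between $\mathrm{div}^{\overline{\nabla}}$ and $\mathrm{div}^{\nabla'}$ exploited in the proof of Theorem \ref{theorem1} (which holds only for vector fields tangent to $\mathcal{F}$) cannot be applied to this term, so one must expand in an explicit frame and use the unit-length condition directly. Once that computation is made, Theorem \ref{theorem2} is little more than a sign-bookkeeping exercise on top of Theorem \ref{theorem1}.
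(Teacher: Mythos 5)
Your proposal is correct and follows essentially the same route as the paper: the paper likewise combines Theorem \ref{theorem1} with the identity $S_{1}=-\mathrm{div}^{\overline{\nabla}}(\widehat{W})$ (computed in the frame $\{Z_{1},\ldots,Z_{n},\widehat{W}\}$ using $\overline{g}(\overline{\nabla}_{\widehat{W}}\widehat{W},\widehat{W})=0$) to obtain $\mathrm{div}^{\overline{\nabla}}(S_{r+1}\widehat{W})=-S_{1}S_{r+1}+\widehat{W}(S_{r+1})$, and then cancels the $\widehat{W}(S_{r+1})$ and $S_{1}S_{r+1}$ terms exactly as you do. Your sign bookkeeping checks out, so nothing further is needed.
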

\begin{proof}
 By straightforward calculations we have 
 \begin{align*}
  S_{1}=\mathrm{tr}(\mathcal{A}_{\widehat{W}})=-\sum_{i=1}^{n}\epsilon_{i}\overline{g}(\overline{\nabla}_{Z_{i}}\widehat{W},Z_{i})=-\sum_{i=1}^{n+1}\epsilon_{i}\overline{g}(\overline{\nabla}_{Z_{i}}\widehat{W},Z_{i})=-\mathrm{div}^{\overline{\nabla}}(\widehat{W}),
 \end{align*}
where $Z_{n+1}=\widehat{W}$. From this equation we deduce
\begin{align}\label{P31}
 \mathrm{div}^{\overline{\nabla}}(S_{r+1}\widehat{W})=-S_{1}S_{r+1}+\widehat{W}(S_{r+1}).
\end{align}
Then from (\ref{P31}) and Theorem \ref{theorem1} we get our assertion, hence the proof.
\end{proof}
Next, we let $dV$ denote the volume form $\overline{M}$. Then from Theorem \ref{theorem2} we
\begin{corollary}\label{corollary1}
 Let $M$ be a screen integrable half-lightlike submanifold of a compact semi-Riemannian manifold $\overline{M}$ and let $\mathcal{F}$ be a co-dimension three foliation of $S(TM)$. Then
 \begin{align*}
  &\int_{\overline{M}}\overline{g}( \mathrm{div}^{\nabla'}(T_{r}),\overline{\nabla}_{\widehat{W}}\widehat{W})dV\\
  &=\int_{\overline{M}}((-1)^{r}(r+2)S_{r+2}+\sum_{i=1}^{n}\epsilon_{i}\overline{g}(\overline{R}(Z_{i},\widehat{W})T_{r}Z_{i},\widehat{W})dV
 \end{align*}

\end{corollary}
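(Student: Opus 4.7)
The plan is simply to integrate the pointwise divergence identity of Theorem \ref{theorem2} over the compact ambient manifold $\overline{M}$ and invoke the semi-Riemannian divergence theorem to kill the left-hand side.

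First, I would observe that Theorem \ref{theorem2} asserts that the smooth vector field
$$X := T_{r}\overline{\nabla}_{\widehat{W}}\widehat{W}+(-1)^{r}S_{r+1}\widehat{W}$$
satisfies the pointwise identity
$$\mathrm{div}^{\overline{\nabla}}(X) = \overline{g}\bigl(\mathrm{div}^{\nabla'}(T_{r}),\overline{\nabla}_{\widehat{W}}\widehat{W}\bigr)+(-1)^{r+1}(r+2)S_{r+2}-\sum_{i=1}^{n}\epsilon_{i}\overline{g}\bigl(\overline{R}(Z_{i},\widehat{W})T_{r}Z_{i},\widehat{W}\bigr).$$
Integrating both sides against the volume form $dV$ over $\overline{M}$ and using that $\overline{M}$ is compact (and, as is conventional here, without boundary), the semi-Riemannian divergence theorem gives $\int_{\overline{M}}\mathrm{div}^{\overline{\nabla}}(X)\,dV = 0$. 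Setting the integral of the right-hand side equal to zero, transposing the term involving $\overline{g}(\mathrm{div}^{\nabla'}(T_{r}),\overline{\nabla}_{\widehat{W}}\widehat{W})$, and using $-(-1)^{r+1}=(-1)^{r}$ produces the claimed formula.

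The step that requires the most care, and which I would flag as the one non-routine point, is the justification that the divergence theorem applies to $X$ on $\overline{M}$. The vector field $X$ is constructed from the data $\widehat{W}$, $T_{r}$, and $S_{r+1}$ associated to the foliation $\mathcal{F}$ of $S(TM)\subset TM\subset T\overline{M}$; one needs to check that $X$ is a well-defined smooth section of $T\overline{M}$ on the relevant open set, and that the integrand in the corollary statement extends (smoothly or at least integrably) to all of $\overline{M}$ so that the integral expression makes sense. Once this global well-definedness is granted, the remainder of the argument is immediate: it is a one-line application of $\int_{\overline{M}}\mathrm{div}^{\overline{\nabla}}(X)\,dV=0$ to the identity of Theorem \ref{theorem2}, followed by algebraic rearrangement.
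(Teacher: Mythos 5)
Your proposal is correct and follows exactly the route the paper intends: integrate the pointwise identity of Theorem \ref{theorem2} over the compact manifold $\overline{M}$, use the divergence theorem to annihilate $\int_{\overline{M}}\mathrm{div}^{\overline{\nabla}}(T_{r}\overline{\nabla}_{\widehat{W}}\widehat{W}+(-1)^{r}S_{r+1}\widehat{W})\,dV$, and rearrange signs. The paper leaves this step implicit, so your explicit flagging of the divergence-theorem application and the smoothness/global definedness of the vector field is a reasonable elaboration of the same argument.
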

Setting $r=0$ in the above corollary we get 

\begin{corollary}\label{corollary3}
 Let $M$ be a screen integrable half-lightlike submanifold of a compact semi-Riemannian manifold $\overline{M}$ and let $\mathcal{F}$ be a co-dimension three foliation of $S(TM)$ with mean curvatures $S_{r}$. Then for $r=0$ we have 
 \begin{align*}
 \int_{\overline{M}}2S_{2}dV=\int_{\overline{M}}\overline{Ric}(\widehat{W},\widehat{W})dV,
\end{align*}
where $\displaystyle\overline{Ric}(\widehat{W},\widehat{W})=\sum_{i=1}^{n}\epsilon_{i}\overline{g}(\overline{R}(Z_{i},\widehat{W})\widehat{W},Z_{i})$.
\end{corollary}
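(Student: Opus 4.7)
The plan is to obtain the formula as an essentially direct specialization of Corollary \ref{corollary1} at $r=0$, combined with standard curvature symmetries. I would proceed in three short stages.

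First, I would set $r=0$ in the integral identity of Corollary \ref{corollary1}. Because $T_{0}=\mathbb{I}$, Lemma \ref{lemma2} yields $\mathrm{div}^{\nabla'}(T_{0})=0$, so the left-hand side of Corollary \ref{corollary1} vanishes identically. The right-hand side, using $T_{0}Z_{i}=Z_{i}$ and $(-1)^{0}(0+2)=2$, reduces to
\begin{equation*}
\int_{\overline{M}} \Bigl( 2S_{2} + \sum_{i=1}^{n}\epsilon_{i}\,\overline{g}(\overline{R}(Z_{i},\widehat{W})Z_{i},\widehat{W})\Bigr) dV = 0.
\end{equation*}

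Second, I would convert the curvature sum into the Ricci expression appearing in the statement. Since $\overline{\nabla}$ is the Levi-Civita connection of $\overline{g}$, the curvature tensor satisfies the skew-symmetry $\overline{g}(\overline{R}(X,Y)Z,V)=-\overline{g}(\overline{R}(X,Y)V,Z)$. Applied with $X=Z_{i}$, $Y=\widehat{W}$, $Z=Z_{i}$, $V=\widehat{W}$ this gives
\begin{equation*}
\overline{g}(\overline{R}(Z_{i},\widehat{W})Z_{i},\widehat{W}) = -\,\overline{g}(\overline{R}(Z_{i},\widehat{W})\widehat{W},Z_{i}),
\end{equation*}
so that $\sum_{i}\epsilon_{i}\,\overline{g}(\overline{R}(Z_{i},\widehat{W})Z_{i},\widehat{W}) = -\overline{Ric}(\widehat{W},\widehat{W})$ with the convention fixed in the statement.

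Third, substituting this back and rearranging yields
\begin{equation*}
\int_{\overline{M}} 2 S_{2}\, dV \;=\; \int_{\overline{M}} \overline{Ric}(\widehat{W},\widehat{W})\, dV,
\end{equation*}
which is the claim. There is essentially no obstacle: the only thing to be careful about is the sign coming from the skew-symmetry of $\overline{R}$ in its last two slots, and the fact that adding $Z_{n+1}=\widehat{W}$ to the frame (as in the proof of Theorem \ref{theorem2}) would contribute $\overline{R}(\widehat{W},\widehat{W})\widehat{W}=0$, so the sum of $i$ from $1$ to $n$ already captures the full Ricci trace. Thus the corollary is a genuine specialization, and no further hypothesis beyond compactness of $\overline{M}$ (used to justify the vanishing of the integral of $\mathrm{div}^{\overline{\nabla}}$ in Corollary \ref{corollary1}) is required.
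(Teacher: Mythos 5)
Your proposal is correct and follows essentially the same route as the paper, which obtains the corollary simply by setting $r=0$ in Corollary \ref{corollary1}, using $T_{0}=\mathbb{I}$ and $\mathrm{div}^{\nabla'}(T_{0})=0$ from Lemma \ref{lemma2}. Your explicit use of the skew-symmetry $\overline{g}(\overline{R}(Z_{i},\widehat{W})Z_{i},\widehat{W})=-\overline{g}(\overline{R}(Z_{i},\widehat{W})\widehat{W},Z_{i})$ just spells out the sign step the paper leaves implicit in matching the curvature sum with its Ricci convention.
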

Notice that the equation in Corollary \ref{corollary3} is the lightlike analogue of (3.5) in \cite{krz} for co-dimension one foliations on Riemannian manifolds.

Next,  we will discuss some consequences of the integral formulas developed so far. 

 A semi-Riemannian manifold $\overline{M}$ of constant sectional curvature $c$ is called a \textit{semi-Riemannian space form} \cite{db, ds2} and is denoted by $\overline{M}(c)$. Then, the curvature tensor $\overline{R}$ of $\overline{M}(c)$ is given by 
\begin{equation}\label{N24}
 \overline{R}(\overline{X},\overline{Y})\overline{Z}=c\{\overline{g}(\overline{Y},\overline{Z})\overline{X}-\overline{g}(\overline{X},\overline{Z})\overline{Y}\},\quad\forall\, \overline{X},\overline{Y},\overline{Z}\in\Gamma(T\overline{M}).
\end{equation} 

\begin{theorem}\label{theorem3}
 Let $M$ be a screen integrable half-lightlike submanifold of a compact semi-Riemannian space form $\overline{M}(c)$ of constant sectional curvature $c$. Let $\mathcal{F}$ be a co-dimension three foliation of its screen distribution $S(TM)$. If $V$ is the total volume of $\overline{M}$, then
 \begin{equation}
  \int_{\overline{M}}S_{r}dV=\begin{dcases}
0, & r=2k+1, \\
c^{\frac{r}{2}}\dbinom{\frac{n}{2}}{\frac{r}{2}}V, & r=2k,
\end{dcases}
 \end{equation}
for positive integers $k$.
\end{theorem}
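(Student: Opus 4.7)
The plan is to specialize Corollary \ref{corollary1} to a space form $\overline{M}(c)$, where the two flexible ingredients on the right-hand side collapse to explicit expressions in $S_r$, thereby producing a recursion of the form
$$
(r+2)\int_{\overline{M}} S_{r+2}\,dV \;=\; c(n-r)\int_{\overline{M}} S_r\,dV.
$$
The closed formula will then follow by induction from two base cases: the even base $r=0$ from the normalization $S_0=1$, and the odd base $r=1$ from the divergence theorem.

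First, by the remark immediately after Lemma \ref{lemma2}, in a space form the tangential component $(\overline{R}(\widehat{W},X)Y)'$ vanishes for all $X,Y\in\Gamma(T\mathcal{F})$ (since (\ref{N24}) makes $\overline{R}(\widehat{W},X)Y$ proportional to $\widehat{W}$), and an induction on $r$ via Lemma \ref{lemma2} gives $\mathrm{div}^{\nabla'}(T_r)=0$. Hence the left-hand side of Corollary \ref{corollary1} vanishes identically. Next, using (\ref{N24}) together with $\overline{g}(\widehat{W},\widehat{W})=1$ and the fact that $T_r$ preserves $T\mathcal{F}$ (so $T_r Z_i\perp \widehat{W}$), one computes
$$
\overline{g}(\overline{R}(Z_i,\widehat{W})T_r Z_i,\widehat{W}) \;=\; -c\,\overline{g}(Z_i,T_r Z_i).
$$
Summing over $i$ with signs $\epsilon_i$ and applying the trace formula (\ref{N31}) yields
$$
\sum_{i=1}^{n}\epsilon_i\,\overline{g}(\overline{R}(Z_i,\widehat{W})T_r Z_i,\widehat{W}) \;=\; -c\,\mathrm{tr}(T_r) \;=\; (-1)^{r+1}c(n-r)S_r.
$$
Inserting both computations into Corollary \ref{corollary1} and cancelling the common sign $(-1)^r$ produces the recursion displayed above.

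For the base cases, $\int_{\overline{M}} S_0\,dV = V$ is immediate from $S_0=1$, while the identity $S_1 = -\mathrm{div}^{\overline{\nabla}}(\widehat{W})$ established inside the proof of Theorem \ref{theorem2}, combined with compactness of $\overline{M}$ and the divergence theorem, gives $\int_{\overline{M}} S_1\,dV = 0$. The recursion then propagates the zero through every odd index $r=2k+1$, and for even $r=2k$ an induction reduces the target identity $\int_{\overline{M}} S_{2k}\,dV = c^{k}\binom{n/2}{k}V$ to the Pascal-type identity $\binom{n/2}{k+1} = \frac{n-2k}{2(k+1)}\binom{n/2}{k}$, which matches the factor produced by one step of the recursion. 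The only non-mechanical point is securing $\int_{\overline{M}} S_1\,dV = 0$; once that is in place via the divergence identification used in Theorem \ref{theorem2}, the rest is a direct consequence of Corollary \ref{corollary1}, the space-form curvature identity (\ref{N24}), and the trace formula (\ref{N31}).
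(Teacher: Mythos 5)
Your proposal is correct and follows essentially the same route as the paper's proof: specialize Corollary \ref{corollary1} to the space form, use Lemma \ref{lemma2} to kill $\mathrm{div}^{\nabla'}(T_r)$, evaluate the curvature term via (\ref{N24}) and (\ref{N31}) to obtain the recursion $(r+2)\int_{\overline{M}}S_{r+2}\,dV=c(n-r)\int_{\overline{M}}S_r\,dV$, and then induct from $S_0=1$ and $\int_{\overline{M}}S_1\,dV=0$ (via $S_1=-\mathrm{div}^{\overline{\nabla}}(\widehat{W})$ and compactness). Your explicit sign bookkeeping and the Pascal-type identity are just a cleaner presentation of the paper's product formula (\ref{P37}).
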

\begin{proof}
 By setting $\overline{X}=Z_{i}$, $\overline{Y}=\widehat{W}$ and $Z=T_{r}Z_{i}$ in (\ref{N24}) we deduce that $ \overline{R}(Z_{i},\widehat{W})T_{r}Z_{i}=-cg(Z_{i},T_{r}Z_{i})\widehat{W}$. Then substituting this equation in Corollary \ref{corollary1} we obtain
 \begin{align*}
  \int_{\overline{M}}\overline{g}( \mathrm{div}^{\nabla'}(T_{r}),\overline{\nabla}_{\widehat{W}}\widehat{W})dV=\int_{\overline{M}}((-1)^{r}(r+2)S_{r+2}-c\mathrm{tr}(T_{r}))dV.
 \end{align*}
Since $\overline{M}$ is of constant sectional curvature $c$, then Lemma \ref{lemma2} implies that $T_{r}=0$ for any $r$ and hence the above equation simplifies to 
\begin{align}\label{P35}
 (r+2)\int_{\overline{M}}S_{r+2}dV=c(n-r)\int_{\overline{M}}S_{r}dV.
\end{align}
Since $S_{1}=-\mathrm{div}^{\overline{\nabla}}(\widehat{W})$ and that $\overline{M}$ is compact, then $\int_{\overline{M}}S_{1}dV=0$. Using this fact together with (\ref{P35}), mathematical induction gives $\int_{\overline{M}}S_{r}dV=0$ for all $r=2k+1$ (i.e., $r$ odd). For $r$ even we will consider $r=2m$ and $n=2l$ (i.e., both $M$ and $\overline{M}$ are odd dimensional). With these conditions, (\ref{P35}) reduces to 
\begin{align}\label{P36}
 \int_{\overline{M}}S_{2m+2}dV=c\frac{l-m}{1+m}\int_{\overline{M}}S_{2m}dV.
\end{align}
Now setting $m=0,1,\cdots$ and $S_{0}=1$ in (\ref{P36}) we obtain 
\begin{align*}
 \int_{\overline{M}}S_{2}dV=cl V, \;\;\;\; \int_{\overline{M}}S_{4}dV=c^{2}\frac{(l-1)l}{2}V, 
\end{align*}
and more generally
\begin{align}\label{P37}
 \int_{\overline{M}}S_{2k}dV=c^{k}\frac{(l-k+1)(l-k+2)(l-k+3)\cdots l}{k!}V.
\end{align}
Hence, our assertion follows from \ref{P37}, which completes the proof.
\end{proof}
Next, when $\overline{M}$ is Einstein i.e., $\overline{Ric}=\mu \overline{g}$ we have the following.
\begin{theorem}\label{theorem5}
 Let $M$ be a screen integrable half-lightlike submanifold of an Einstein compact semi-Riemannian manifold $\overline{M}$. Let $\mathcal{F}$ be a co-dimension three foliation of its screen distribution $S(TM)$ with totally umbilical leaves. If $V$ is the total volume of $\overline{M}$, then
 \begin{equation}
  \int_{\overline{M}}S_{r}dV=\begin{dcases}
0, & r=2k+1, \\
\left(\frac{\mu}{n}\right)^{\frac{n}{2}}\dbinom{\frac{n}{2}}{\frac{r}{2}}V, & r=2k,
\end{dcases}
 \end{equation}
for positive integers $k$.
\end{theorem}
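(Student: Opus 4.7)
The plan is to reduce Theorem \ref{theorem5} to a two-step recursion for $I_r := \int_{\overline{M}} \lambda^r\, dV$, where $\lambda$ is the common principal curvature of the totally umbilical leaves. Since $\mathcal{A}_{\widehat{W}}|_{T\mathcal{F}} = \lambda\,\mathbb{I}$ for some smooth function $\lambda$, all $n$ principal curvatures coincide, so $S_r = \binom{n}{r}\lambda^r$, and the Newton recurrence \eqref{N28} yields $T_r = (-1)^r \binom{n-1}{r}\lambda^r\,\mathbb{I}$. In particular each $T_r$ is pointwise a scalar operator, with $\mathrm{div}^{\nabla'}(T_r) = (-1)^r\binom{n-1}{r}\,r\,\lambda^{r-1}\,\nabla'\lambda$.

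The central technical step is to prove that $\lambda$ is constant along every leaf of $\mathcal{F}$, which then annihilates $\mathrm{div}^{\nabla'}(T_r)$. Substituting $\mathcal{A}_{\widehat{W}} = \lambda\mathbb{I}$ into the Codazzi identity used in the proof of Lemma \ref{lemma2} collapses it to
\begin{equation*}
\overline{g}(\overline{R}(X,Y)Z,\widehat{W}) = Y(\lambda)\,\overline{g}(X,Z) - X(\lambda)\,\overline{g}(Y,Z).
\end{equation*}
Tracing with $X = Z = Z_i$ over an orthonormal frame of $T\mathcal{F}$ and invoking the pair symmetry of $\overline{R}$ gives $\sum_i \epsilon_i\,\overline{g}(\overline{R}(Z_i,\widehat{W})Z_i,Y) = (n-1)\,Y(\lambda)$ for every tangent $Y$; this left-hand side is, up to sign, the partial Ricci contraction $\overline{Ric}(\widehat{W},Y)$ of Corollary \ref{corollary3}, and so equals $\mu\,\overline{g}(\widehat{W},Y) = 0$ under the Einstein hypothesis. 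Hence $Y(\lambda) = 0$ for every $Y \in T\mathcal{F}$, i.e., $\nabla'\lambda \equiv 0$ leafwise, and $\mathrm{div}^{\nabla'}(T_r) = 0$.

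Feeding $\mathrm{div}^{\nabla'}(T_r) = 0$ and $T_r \propto \mathbb{I}$ into Corollary \ref{corollary1}, the remaining curvature sum contracts to $-(-1)^r\binom{n-1}{r}\,\mu\,\lambda^r$ via $\overline{Ric}(\widehat{W},\widehat{W}) = \mu$, so the corollary reduces to
\begin{equation*}
(r+2)\binom{n}{r+2}\int_{\overline{M}} \lambda^{r+2}\,dV = \binom{n-1}{r}\,\mu\int_{\overline{M}} \lambda^r\,dV,
\end{equation*}
which rearranges to the two-step recursion $I_{r+2} = \tfrac{r+1}{n(n-r-1)}\,\mu\,I_r$. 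The base cases are $I_0 = V$ and $I_1 = -\tfrac{1}{n}\int_{\overline{M}}\mathrm{div}^{\overline{\nabla}}(\widehat{W})\,dV = 0$, the latter by the divergence theorem on the compact $\overline{M}$ together with the identity $S_1 = -\mathrm{div}^{\overline{\nabla}}(\widehat{W})$ established in the proof of Theorem \ref{theorem2}.

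An induction then kills every odd $I_{2k+1}$, giving $\int_{\overline{M}} S_{2k+1}\,dV = 0$. For even indices, iterating the recursion yields $I_{2k} = \bigl(\tfrac{\mu}{n}\bigr)^k\,\tfrac{(2k-1)!!}{(n-1)(n-3)\cdots(n-2k+1)}\,V$; a short double-factorial manipulation (using $(2l)! = 2^l l!\,(2l-1)!!$ with $n = 2l$) then simplifies $\binom{n}{2k}\,I_{2k}$ to $\bigl(\tfrac{\mu}{n}\bigr)^{k}\binom{n/2}{k}\,V$, the asserted value of $\int_{\overline{M}} S_{2k}\,dV$ after setting $r = 2k$. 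The one substantive obstacle is the Codazzi--Einstein argument producing leafwise constancy of $\lambda$; once that is in place, the remainder mirrors the proof of Theorem \ref{theorem3} almost verbatim, with the constant sectional curvature $c$ there replaced here by $\mu/n$.
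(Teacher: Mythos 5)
Your proposal is correct and takes essentially the same route as the paper: total umbilicity makes $T_{r}$ a scalar multiple of the identity, the Einstein condition (read, as in Corollary \ref{corollary3}, through the partial Ricci trace over the leaf directions) both kills the divergence term in Corollary \ref{corollary1} and evaluates the curvature sum as $\mu$ times a scalar, and the resulting two-step recursion is closed by the same induction as in Theorem \ref{theorem3}; your Codazzi argument that $\lambda$ is leafwise constant is simply the explicit version of the paper's assertion that $\overline{Ric}(\widehat{W},\overline{\nabla}_{\widehat{W}}\widehat{W})=0$ annihilates $\overline{g}(\mathrm{div}^{\nabla'}(T_{r}),\overline{\nabla}_{\widehat{W}}\widehat{W})$, and your $T_{r}=(-1)^{r}\binom{n-1}{r}\lambda^{r}\mathbb{I}$ corrects a sign slip in the paper's displayed $T_{r}$. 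Note also that your final value $\left(\frac{\mu}{n}\right)^{r/2}\binom{n/2}{r/2}V$ coincides with what the paper's own proof actually derives, so the exponent $\frac{n}{2}$ in the theorem's displayed formula is evidently a misprint for $\frac{r}{2}$.
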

\begin{proof}
 Suppose that $\mathcal{A}_{\widehat{W}}=\frac{1}{n}S_{r}\mathbb{I}$. Then by direct calculations using the formula for $T_{r}$ we derive $T_{r}=(-1)^{r+1}\frac{(n-r)}{n}S_{r}\mathbb{I}$. Then, under the assumptions of the theorem we obtain $\overline{Ric}(\widehat {W}, \overline{\nabla}_{\widehat{W}}\widehat{W})=0$ and $\overline{Ric}(\widehat {W},\widehat{W})=\mu$ and hence, Corollary \ref{corollary1} reduces to 
 \begin{align}\label{P50}
  n(r+2)\int_{\overline{M}}S_{r+2}dV=\lambda(n-r)\int_{\overline{M}}S_{r}dV.
 \end{align}
Notice that (\ref{P50}) is similar to (\ref{P35}) and hence following similar steps as in the previous theorem we get $\int_{\overline{M}}S_{r}dV=0$ for $r$ odd and for $r$ even we get 
\begin{align*}
 \int_{\overline{M}}S_{2k}dV=\left(\frac{\mu}{n}\right)^{k}\frac{(l-k+1)(l-k+2)(l-k+3)\cdots l}{k!}V,
\end{align*}
which complete the proof.
\end{proof}

\section{Screen umbilical half-lightlike submanifolds}\label{screen}

In this section we consider totally umbilical half-lightlike submanifolds of semi-Riemannian manifold, with a totally umbilical screen distribution and thus, give a generalized version of Theorem 4.3.7 of \cite{ds2} and its Corollaries, via Newton transformations of the operator $A_{N}$.

A screen distribution $S(TM)$ of a half-lightlike submanifold $M$ of a semi-Riemannian manifold $\overline{M}$ is said to be totally umbilical \cite{ds2} if on any coordinate neighborhood $\mathcal{U}$ there exist a function $K$ such that 
\begin{align}\label{P64}
 C(X,PY)=Kg(X,PY),\;\;\;\forall\, X,Y\in\Gamma(TM).
\end{align}
In case $K=0$, we say that $S(TM)$ is totally geodesic. Furthermore, if $S(TM)$ is totally umbilical then by straightforward calculations we have 
\begin{align}\label{P65}
 A_{N}X=PX,\;\;\; C(E,PX)=0,\;\;\; \forall\, X\in\Gamma(TM).
\end{align}
Let $\{E,Z_{i}\}$, for $i=1,\cdots,n$, be a quasi-orthonormal frame field of $TM$ which diagonalizes $A_{N}$. Let $l_{0},l_{1},\cdots,l_{n}$ be the respective eigenvalues (or principal curvatures). Then as before, the $r$-th mean curvature $S_{r}$ is given by 
\begin{align*}
 S_{r}=\sigma_{r}(l_{0},\cdots,l_{n}) \;\;\mbox{and}\;\;S_{0}=1.
\end{align*}
The characteristic polynomial of $A_{N}$ is given by 
\begin{equation*}
 \det(A_{N}-t\mathbb{I})=\sum_{\alpha=0}^{n}(-1)^{\alpha}S_{r}t^{n-\alpha},
\end{equation*}
where $\mathbb{I}$ is the identity in $\Gamma(TM)$. The normalized $r$-th mean curvature $H_{r}$ of $M$ is defined by
$\dbinom{n}{r}H_{r}=S_{r}\;\;\;\mbox{and}\;\;\;H_{0}=1$.  The Newton transformations $T_{r}:\Gamma(TM)\rightarrow \Gamma(TM)$ of  $A_{N}$ are given by the inductive formula
\begin{equation}\label{K28}
T_{0}=\mathbb{I},\quad T_{r}=(-1)^{r}S_{r}\mathbb{I}+A_{N}\circ T_{r-1}, \;\;\; 1\leq r\leq n.
\end{equation}
By Cayley-Hamiliton theorem, we have $T_{n+1}=0$. Also, $T_{r}$ satisfies the following properties.
\begin{align}
 \mathrm{tr}(T_{r}) & =(-1)^{r}(n+1-r) S_{r},\label{K31}\\
 \mathrm{tr}(A_{N}\circ T_{r}) & =(-1)^{r}(r+1)S_{r+1},\label{K32}\\
 \mathrm{tr}(A_{N}^{2}\circ T_{r}) & =(-1)^{r+1}(-S_{1} S_{r+1} + (r + 2) S_{r+2}),\label{K33}\\
 \mathrm{tr}(T_{r}\circ\nabla_{X} A_{N})& = (-1)^{r}X(S_{r+1}).\label{K34}
\end{align}
for all $X\in\Gamma(TM)$.
\begin{proposition}\label{pp1}
 Let $(M,g)$ be a totally umbilical half-lightlike submanifold of a semi-Riemannian manifold $\overline{M}$ of constant sectional curvature $c$. Then
 \begin{align*}
  & g(\mathrm{div}^{\nabla}(T_{r}),X)  =(-1)^{r-1}\lambda(X)E(S_{r})-\tau(X)\mathrm{tr}(A_{N}\circ T_{r-1})\\
  & -c\lambda(X)\mathrm{tr}(T_{r-1}) +g(\mathrm{div}^{\nabla}(T_{r-1}),A_{N}X)+g((\nabla_{E}A_{N})T_{r-1}E,X)\\
  &+\sum_{i=1}^{n}\epsilon_{i}\{-\lambda(X)B(Z_{i},A_{N}(T_{r-1}Z_{i}))\\
&+\varepsilon \tau(Z_{i})C(X,T_{r-1}Z_{i})\{\rho(X)D(Z_{i},T_{r-1}Z_{i})-\rho(Z_{i})D(X,T_{r-1}Z_{i})\}\},
 \end{align*}
for any $X\in\Gamma(TM)$.
\end{proposition}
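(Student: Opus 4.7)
The plan is to mirror the proof of Lemma \ref{lemma2}: apply the divergence operator to the Newton recurrence $T_r = (-1)^r S_r \mathbb{I} + A_N \circ T_{r-1}$ using a quasi-orthonormal screen frame $\{Z_i\}_{i=1}^n$, then rewrite the resulting covariant derivatives of $A_N$ via the Codazzi-type identity (\ref{P60}). Two features distinguish the present setting from Section \ref{dist}: the induced connection $\nabla$ on $TM$ is not metric (see (\ref{P6})), so additional $B$-correction terms appear, and (\ref{P60}) carries the $\tau$-, $\rho$-, $C$-, $D$-contributions visible on the right-hand side of the statement.

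Concretely, I would first differentiate the recurrence and pair with $X\in\Gamma(TM)$ to obtain
\begin{align*}
g(\mathrm{div}^{\nabla}(T_r),X)&=(-1)^r(PX)(S_r)+\sum_i\epsilon_i g((\nabla_{Z_i}A_N)(T_{r-1}Z_i),X)\\
&\quad+g(\mathrm{div}^{\nabla}(T_{r-1}),A_N X),
\end{align*}
where the last term uses self-adjointness of $A_N$ (a consequence of totally umbilical $S(TM)$), and the first term splits as $(-1)^r X(S_r)+(-1)^{r-1}\lambda(X)E(S_r)$ via $PX=X-\lambda(X)E$. Next, a direct computation from (\ref{P6}) yields the non-metric analogue of Lemma \ref{lemma1},
\begin{align*}
g((\nabla_{Z_i}A_N)Y,Z)-g(Y,(\nabla_{Z_i}A_N)Z)&=B(Z_i,A_N Z)\lambda(Y)-B(Z_i,A_N Y)\lambda(Z),
\end{align*}
and since $T_{r-1}Z_i\in S(TM)$ has $\lambda(T_{r-1}Z_i)=0$, only the $-\lambda(X)B(Z_i,A_N T_{r-1}Z_i)$ correction survives. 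Applying (\ref{P60}) with $(X,Y,PZ)\mapsto(Z_i,X,T_{r-1}Z_i)$ then exchanges $(\nabla_{Z_i}A_N)X$ for $(\nabla_X A_N)Z_i$ while releasing the $\overline{R}$-, $\tau$-, $\rho$-, $C$-, $D$-terms; the constant-curvature assumption reduces $\overline{g}(\overline{R}(Z_i,X)T_{r-1}Z_i,N)$ to $-c\lambda(X)g(Z_i,T_{r-1}Z_i)$. Finally, property (\ref{K34}) together with $(\nabla_X A_N)E = A_N A^*_E X\in S(TM)$ gives $\sum_i\epsilon_i g((\nabla_X A_N)Z_i,T_{r-1}Z_i)=(-1)^{r-1}X(S_r)$, which cancels the stray $(-1)^r X(S_r)$ from the first step.

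The main obstacle will be careful bookkeeping: tracking which $\lambda$-factors vanish on screen vectors and survive only through the $E$-component of $X$, and reconciling the screen-sum traces with the full traces $\mathrm{tr}(T_{r-1})$ and $\mathrm{tr}(A_N\circ T_{r-1})$ on $TM$. The relation $T_{r-1}E=(-1)^{r-1}S_{r-1}E$ (itself a consequence of $A_N E=0$) performs this reconciliation and also contextualises the term $g((\nabla_E A_N)(T_{r-1}E),X)$ in the statement as the natural $E$-direction contribution. The identity $\sum_i\epsilon_i C(Z_i,T_{r-1}Z_i)=\mathrm{tr}(A_N\circ T_{r-1})$, which follows from $C(X,PY)=g(A_N X,PY)$, produces the coefficient $-\tau(X)\mathrm{tr}(A_N\circ T_{r-1})$, and the remaining $B$-, $C$-, $\rho$-, $D$-terms assemble into the bracketed sum over $i$ in the proposition.
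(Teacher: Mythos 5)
Your proposal follows essentially the same route as the paper's own proof: differentiate the Newton recurrence (\ref{K28}) over the quasi-orthonormal frame, use the non-metricity relation (\ref{P6}) together with the screen-valuedness of $A_{N}$ to obtain the correction $-\lambda(X)B(Z_{i},A_{N}T_{r-1}Z_{i})$, and then apply (\ref{P60}) with (\ref{N24}) to trade $(\nabla_{Z_{i}}A_{N})X$ for $(\nabla_{X}A_{N})Z_{i}$ and release the $c$-, $\tau$-, $\rho$-, $C$-, $D$-terms, exactly as in the paper's equations (\ref{P66})--(\ref{P69}). Your explicit cancellation of the $X(S_{r})$ contribution via (\ref{K34}), leaving only $(-1)^{r-1}\lambda(X)E(S_{r})$, and your handling of the $E$-direction term $g((\nabla_{E}A_{N})T_{r-1}E,X)$ merely spell out details the paper leaves implicit.
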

\begin{proof}
 From the recurrence relation (\ref{K28}), we derive 
\begin{align}\label{P66}
 &g(\mathrm{div}^{\nabla}(T_{r}),X)=(-1)^{r}PX(S_{r})+g((\nabla_{E}A_{N})T_{r-1}E,X)\nonumber\\
 &+g(\mathrm{div}^{\nabla}(T_{r-1}),A_{N}X)+\sum_{i=1}^{n}\epsilon_{i}g((\nabla_{Z_{i}}A_{N})T_{r-1}Z_{i},X).
\end{align}
for any $X\in\Gamma(TM)$.
But  
  \begin{align}\label{P67}
  g((\nabla_{Z_{i}}A_{N})T_{r-1}Z_{i},X)  &= g(T_{r-1}Z_{i},(\nabla_{Z_{i}}A_{N})X) + g(\nabla_{Z_{i}}A_{N}(T_{r-1}Z_{i}),X)\nonumber\\
                                        & -g(\nabla_{Z_{i}}(A_{N}X),T_{r-1}Z_{i}) + g(A_{N}(\nabla_{Z_{i}}X),T_{r-1}Z_{i})\nonumber\\
                                        &-g(A_{N}(\nabla_{Z_{i}}T_{r-1}Z_{i}),X),
 \end{align}
for all $X\in\Gamma(TM)$.
\end{proof}
Then applying (\ref{P6}) to (\ref{P67}) while considering the fact that $A_{N}$ is screen-valued, we get 
\begin{align}\label{P68}
 g((\nabla_{Z_{i}}A_{N})T_{r-1}Z_{i},X)= g(T_{r-1}Z_{i},(\nabla_{Z_{i}}A_{N})X)-\lambda(X)B(Z_{i},A_{N}(T_{r-1}Z_{i}))
\end{align}
Furthermore, using (\ref{P60}) and (\ref{N24}), the first term on the right hand side of (\ref{P68}) reduces to 
\begin{align}\label{P69}
 g(T_{r-1}Z_{i},&(\nabla_{Z_{i}}A_{N})X)=-c\lambda(X)g(Z_{i},T_{r-1}Z_{i})+g((\nabla_{X}A_{N})Z_{i},T_{r-1}Z_{i})\nonumber\\
 &-\tau(X)C(Z_{i},T_{r-1}Z_{i})+\varepsilon\tau(Z_{i})C(X,T_{r-1}Z_{i})\{\rho(X)D(Z_{i},T_{r-1}Z_{i})\nonumber\\
 &-\rho(X)D(X,T_{r-1}Z_{i})\},
\end{align}
for any $X\in\Gamma(TM)$. Finally, replacing (\ref{P69}) in (\ref{P68}) and then put the resulting equation in (\ref{P66}) we get the desired result.

Next, from Proposition \ref{pp1} we have the following.
\begin{theorem}\label{thmy}
 Let $(M, g)$ be a half-lightlike submanifold of a semi-Riemannian manifold $\overline{M}(c)$ of constant curvature $c$, with a proper totally umbilical screen distribution $S(TM)$. If $M$ is also totally umbilical, then the $r$-th mean curvature $S_{r}$, for $r=0,1,\cdots,n$, with respect to $A_{N}$ are solution of the following differential equation
 \begin{align*}
  E(S_{r+1})-\tau(E)(r+1)S_{r+1}-c(-1)^{r}(n+1-r)S_{r}=\mathcal{H}_{1}(r+1)S_{r+1}.
 \end{align*}
\end{theorem}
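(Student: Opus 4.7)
The plan is to apply Proposition \ref{pp1} with $r$ replaced by $r+1$ and with the vector field taken to be $X=E$, and to exploit the massive cancellations forced by the two umbilicity hypotheses together with the fact that $E$ lies in the radical of the degenerate metric $g$ on $TM$.

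First I would record the simplifications. The screen-umbilical condition $C(X,PY)=Kg(X,PY)$ combined with (\ref{P10}) yields $A_{N}X=KPX$; in particular $A_{N}E=0$, $A_{N}(T_{r}Z_{i})=K\,T_{r}Z_{i}$, and an induction on the recurrence (\ref{K28}) gives $T_{r}E=(-1)^{r}S_{r}E$. The umbilicity of $M$ via (\ref{P61}) and (\ref{P62}) gives $B(X,Y)=\mathcal{H}_{1}g(X,Y)$, $D(X,E)=0$, $\rho(E)=0$, and (\ref{P5}) reduces to $\nabla_{E}E=-\tau(E)E$. From (\ref{P65}) I also have $C(E,PY)=0$.

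Next I plug $X=E$ into Proposition \ref{pp1} with index $r+1$. The left-hand side $g(\mathrm{div}^{\nabla}(T_{r+1}),E)$ vanishes identically, because $g(V,E)=0$ for every $V\in\Gamma(TM)$ (indeed, $S(TM)\perp\mathrm{Rad}\,TM$ and $E$ is null). On the right-hand side, $g(\mathrm{div}^{\nabla}(T_{r}),A_{N}E)=0$; the term $g((\nabla_{E}A_{N})T_{r}E,E)$ vanishes by a short direct calculation using $A_{N}E=0$, $T_{r}E=(-1)^{r}S_{r}E$, $\nabla_{E}E=-\tau(E)E$ and the splitting (\ref{P4}); and every contribution containing $C(E,T_{r}Z_{i})$, $D(E,T_{r}Z_{i})$, or $\rho(E)$ disappears. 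What survives is
\[
0=(-1)^{r}E(S_{r+1})-\tau(E)\,\mathrm{tr}(A_{N}\circ T_{r})-c\,\mathrm{tr}(T_{r})-\sum_{i=1}^{n}\epsilon_{i}B(Z_{i},A_{N}T_{r}Z_{i}).
\]

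I would then use $A_{N}T_{r}Z_{i}=K T_{r}Z_{i}$ and $B(Z_{i},T_{r}Z_{i})=\mathcal{H}_{1}g(Z_{i},T_{r}Z_{i})$ to rewrite the remaining sum as $-\mathcal{H}_{1}\,\mathrm{tr}(A_{N}\circ T_{r})$, and invoke the Newton trace identities (\ref{K31}) and (\ref{K32}), namely $\mathrm{tr}(T_{r})=(-1)^{r}(n+1-r)S_{r}$ and $\mathrm{tr}(A_{N}\circ T_{r})=(-1)^{r}(r+1)S_{r+1}$. Cancelling the overall factor $(-1)^{r}$ then produces exactly the stated differential equation. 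The hard part will be the verification that $(\nabla_{E}A_{N})T_{r}E=0$: it requires carefully separating the $S(TM)$- and $\mathrm{Rad}\,TM$-components through (\ref{P4}) and (\ref{P5}), using that $A_{N}$ is screen-valued and that $C(E,PY)=0$; everything else amounts to pure bookkeeping of which terms in Proposition \ref{pp1} are killed by the two umbilicity hypotheses.
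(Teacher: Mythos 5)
Your proposal is correct and follows essentially the same route as the paper: substitute $X=E$ into Proposition \ref{pp1} (with the index shifted so that $T_{r}$ and $S_{r+1}$ appear), use the umbilicity relations (\ref{P61}), (\ref{P62}), (\ref{P65}) together with $\lambda(E)=1$, $A_{N}E=0$ and $g(\cdot,E)=0$ to kill the remaining terms, and finish with the trace identities (\ref{K31})--(\ref{K32}). Your extra observations (e.g.\ $T_{r}E=(-1)^{r}S_{r}E$, and that $g((\nabla_{E}A_{N})T_{r}E,E)=0$ already because $E$ is radical) only make explicit the cancellations the paper leaves implicit, so no genuinely different idea is involved.
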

\begin{proof}
 Replacing $X$ with $E$ in the Proposition \ref{pp1} and then using  (\ref{P61}) and (\ref{P65}) we obtain, for all $r=0,1,\cdots,n$,
 \begin{align*} 
 E(S_{r+1})-(-1)^{r}\tau(E)\mathrm{tr}(A_{N}\circ T_{r})-c(-1)^{r}\mathrm{tr}(T_{r})=(-1)^{r}\mathcal{H}_{1}\mathrm{tr}(A_{N}\circ T_{r}),
 \end{align*}
from which the result follows by applying (\ref{K31}) and (\ref{K32}).
\end{proof}
\begin{corollary}\label{P71}
 Under the hypothesis of Theorem \ref{thmy}, the induced connection $\nabla$ on $M$ is a metric connection, if and only if, the $r$-th mean curvature $S_{r}$ with respect to $A_{N}$ are solution of the following equation
  \begin{align*}
  E(S_{r+1})-\tau(E)(r+1)S_{r+1}-c(-1)^{r}(n+1-r)S_{r}=0.
 \end{align*}
\end{corollary}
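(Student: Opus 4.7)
The plan is to show that both sides of the equivalence are governed by the single scalar condition $\mathcal{H}_{1} = 0$, and then read off the equation from Theorem~\ref{thmy}. First I would unpack the non-metricity identity~(\ref{P6}): it gives $(\nabla_{X} g)(Y,Z) = B(X,Y)\lambda(Z) + B(X,Z)\lambda(Y)$. Setting $Y = E$, for which $\lambda(E) = \overline{g}(E,N) = 1$, and $Z \in \Gamma(S(TM))$, for which $\lambda(Z) = 0$, reduces this to $B(X,Z) = 0$ for all $X \in \Gamma(TM)$ and $Z \in \Gamma(S(TM))$. Together with $B(X,E) = 0$ from~(\ref{P7}), this forces $B \equiv 0$. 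Conversely $B \equiv 0$ obviously gives $\nabla g = 0$. Since $M$ is totally umbilical, (\ref{P61}) says $B = \mathcal{H}_{1} g$, and because $g$ is non-degenerate on $S(TM)$, the vanishing of $B$ is equivalent to $\mathcal{H}_{1} = 0$. Hence $\nabla$ is a metric connection if and only if $\mathcal{H}_{1} = 0$.

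With this established, the forward direction is immediate: assuming $\nabla$ is a metric connection, we have $\mathcal{H}_{1} = 0$, so plugging into the equation of Theorem~\ref{thmy} kills the right-hand side and leaves precisely
\begin{equation*}
E(S_{r+1}) - \tau(E)(r+1)S_{r+1} - c(-1)^{r}(n+1-r)S_{r} = 0,
\end{equation*}
valid for every $r = 0,1,\dots,n$.

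For the converse, if this latter equation holds for every $r$, I would subtract it from the equation of Theorem~\ref{thmy}, obtaining $\mathcal{H}_{1}(r+1)S_{r+1} = 0$ for all $r = 0,1,\dots,n$. It remains to exhibit some $r$ in this range for which $S_{r+1}$ does not vanish; this is where the \emph{proper} umbilical hypothesis on $S(TM)$ comes in. By~(\ref{P65}), on a quasi-orthonormal frame $\{E, Z_{1},\dots,Z_{n}\}$ the operator $A_{N}$ is (up to the umbilical scalar $K$ of~(\ref{P64})) the projection $P$, so its eigenvalues are $l_{0} = 0$ and $l_{1} = \dots = l_{n} = K$ with $K \ne 0$; thus $S_{r+1} = \binom{n}{r+1}K^{r+1} \ne 0$ for every $0 \le r \le n-1$. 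Therefore $\mathcal{H}_{1} = 0$, which by the first paragraph means $\nabla$ is metric.

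The only mildly delicate point, and the one place the hypotheses are actually used, is the converse direction: if one dropped the word \emph{proper} from the assumption on $S(TM)$, then $K$ could vanish and the $S_{r+1}$ would all vanish, so $\mathcal{H}_{1}(r+1)S_{r+1} = 0$ would be automatic and would impose no restriction on $\mathcal{H}_{1}$. The rest of the argument is just bookkeeping on top of Theorem~\ref{thmy} and identity~(\ref{P6}).
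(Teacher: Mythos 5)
Your proposal is correct and follows the route the paper intends (the corollary is stated without proof as an immediate consequence of Theorem \ref{thmy}): one uses (\ref{P6}) and (\ref{P7}) to see that $\nabla$ is metric iff $B\equiv 0$, which by the umbilical condition (\ref{P61}) is equivalent to $\mathcal{H}_{1}=0$, and then substitutes into Theorem \ref{thmy}. Your extra care in the converse --- computing the eigenvalues $l_{0}=0$, $l_{i}=K$ of $A_{N}$ from (\ref{P64})--(\ref{P65}) so that the properness hypothesis guarantees $S_{r+1}\neq 0$ and hence forces $\mathcal{H}_{1}=0$ --- is a point the paper leaves tacit, and it is handled correctly.
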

Also the following holds.
\begin{corollary}\label{P72}
 Under the hypothesis of Theorem \ref{thmy}, $\overline{M}(c)$ is a semi-Euclidean space, if and only if, the $r$-th mean curvature $S_{r}$ with respect to $A_{N}$ are solution of the following equation
  \begin{align*}
  E(S_{r+1})-\tau(E)(r+1)S_{r+1}=\mathcal{H}_{1}(r+1)S_{r+1}.
 \end{align*}
\end{corollary}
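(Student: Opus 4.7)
The plan is to obtain Corollary \ref{P72} as a direct specialization of Theorem \ref{thmy}. Since a semi-Euclidean space is precisely a semi-Riemannian space form with vanishing sectional curvature, the claim amounts to the equivalence between $c=0$ and the vanishing of the curvature term $c(-1)^{r}(n+1-r)S_{r}$ in the differential equation established in Theorem \ref{thmy}.

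For the forward implication, I would assume $\overline{M}(c)$ is semi-Euclidean, so $c=0$. Substituting $c=0$ into
\begin{align*}
E(S_{r+1})-\tau(E)(r+1)S_{r+1}-c(-1)^{r}(n+1-r)S_{r}=\mathcal{H}_{1}(r+1)S_{r+1}
\end{align*}
from Theorem \ref{thmy} immediately yields the required equation
\begin{align*}
E(S_{r+1})-\tau(E)(r+1)S_{r+1}=\mathcal{H}_{1}(r+1)S_{r+1}.
\end{align*}

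For the converse, I would assume the displayed equation holds for every $r=0,1,\dots,n$, subtract it from the identity of Theorem \ref{thmy}, and conclude that
\begin{align*}
c(-1)^{r}(n+1-r)S_{r}=0,\qquad r=0,1,\dots,n.
\end{align*}
Taking $r=0$ and using $S_{0}=1$ together with $n+1\geq 1$, this forces $c=0$, so that $\overline{M}(c)$ is semi-Euclidean.

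There is essentially no analytic obstacle here, as the corollary is purely an algebraic specialization; the only thing to be careful about is to ensure the coefficient $(n+1-r)$ is nonzero for the value of $r$ used in the converse, which is why evaluating at $r=0$ is the cleanest choice. The proof I would record in the paper therefore consists of these two short substitutions.
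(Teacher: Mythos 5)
Your proposal is correct and matches the paper's (implicit) reasoning: the corollary is stated as an immediate consequence of Theorem \ref{thmy}, obtained by setting $c=0$ for one direction and, for the converse, comparing the two equations and evaluating at $r=0$ with $S_{0}=1$ to force $c=0$. Nothing further is needed.
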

Notice that Theorem \ref{thmy} and Corollary \ref{P71} are generalizations of Theorem 4.3.7 and Corollary 4.3.8, respectively, given in \cite{ds2}.


\end{document}